\theoremstyle{plain}
\newtheorem{theorem}{Theorem}[section]
\theoremstyle{definition}
\newcommand\tb{\textbf}
\newcommand{\N}{\mathbb{N}}
\newcommand{\mcal}{\mathcal}
\DeclareMathOperator{\dom}{\mathrm{dom}}
\newcommand{\ha}{{\sf HA}}                         
\newcommand{\pa}{\ensuremath{\mathsf{PA}}}         
\newcommand{\aca}{\ensuremath{\mathsf{ACA}_0}}
\newcommand{\haw}{\mathsf {HA}^\omega}                
\newcommand{\ehaw}{{\sf E}\text{-}{\sf HA}^\omega}      
\newcommand{\ehawo}{{\widehat{{\sf E}\text{-}{\sfHA}}}
\newcommand{\wehaw}{\mathsf{WE}\text{-}\mathsf {HA}^\omega}          
\strut^\omega_{\raise2pt\hbox{\scriptsize${\mathord{\upharpoonright}}$}}}  
\newcommand{\whawo}{{\widehat{\mathsf{WE}\text{-}\mathsf{HA}}}        
	\strut^\omega_{\raise2pt\hbox{\scriptsize${\mathord{\upharpoonright}}$}}}
\newcommand{\epawo}{{\widehat{{\sf E}\text{-}                
			{\sf PA}}}\strut^\omega_{\raise2pt\hbox{\scriptsize${\mathord{\upharpoonright}}$}}}   
\newcommand{\epato}{{\widehat{{\sf E}\text{-}{\sf PA}}}\strut^2_{\raise2pt\hbox{\scriptsize${\mathord{\upharpoonright}}$}}}  
\newcommand{\wepawo}{{\widehat{\mathsf{WE}\text{-}\mathsf{PA}}}        
	\strut^\omega_{\raise2pt\hbox{\scriptsize${\mathord{\upharpoonright}}$}}}
\newcommand{\wepato}{{\widehat{\mathsf{WE}\text{-}\mathsf{PA}}}        
	\strut^2_{\raise2pt\hbox{\scriptsize${\mathord{\upharpoonright}}$}}}
\newcommand{\ac}{{\sf AC}}
\newcommand{\dc}{{\sf DC}}
\newcommand{\re}{\ \mathrm{realizes} }
\newcommand{\fo}{\Vdash }
\newcommand{\imp}{\rightarrow}
\newcommand{\biimp}{\leftrightarrow}
\newcommand{\pair}[2]{\langle #1,#2\rangle}
\newcommand{\kl}[1]{\{#1\}}
\newcommand{\vp}{\varphi}
\newcommand{\ind}{\Lambda} 
\title{On Goodman realizability}
\author[Frittaion]{Emanuele Frittaion}
\address{Departamento de Matem\'{a}tica,  Universidade de Lisboa, Portugal}
\keywords{Goodman, realizability, axiom of choice, extensionality}
\begin{document}

\subjclass[2010]{Primary: 03F03. Secondary: 03F10; 03F30; 03F35; 03F50}

\maketitle
\tableofcontents

\begin{abstract}
Goodman's theorem (1976) states that $\haw+\ac+{\sf DC}$ is conservative over $\ha$. The same result applies to the extensional case, that is, $\ehaw+\ac+{\sf DC}$ is also conservative over $\ha$.  This is due to Beeson (1979).   In this paper we modify Goodman realizability \cite{goodman} and provide a new proof of the extensional case. 
\end{abstract}

\section{Introduction}
Goodman's theorem states that intuitionistic finite-type arithmetic $\haw$ plus the axiom of choice $\ac$ plus the axiom of (relativized) dependent choice $\sf RDC$ is conservative over $\ha$. This result does not extend to the classical case. In fact, adding classical logic leads to a system as strong as full second-order arithmetic. On the other hand, the same result does not apply to subsystems of $\haw$ with restricted induction  \cite{kohl99}. The original proof of this theorem is due to Goodman \cite{goodman76} and is based on his arithmetic theory of constructions.   A direct proof appears in  Goodman  \cite{goodman} and relies on a realizability notion  which blends together  the model $\sf HRO$ of the hereditarily recursive operations, Kleene recursive realizability, and Kripke semantics. A restyling of Goodman's proof can be found in Beeson \cite{beeson}, where  Goodman realizability \cite{goodman} splits into a realizability part and  a forcing part.  The same framework appears in other papers on Goodman's theorem \cite{coq,lava}.

In this paper we introduce a customized version of Goodman realizability and give a new proof of the  extensional case, that is, we show that $\ehaw+\ac+\dc$ is conservative over $\ha$. The only known proof of this result is also due to Beeson \cite{beeson} (\footnote{Apparently Beeson does not consider the axiom of relativized dependent choice.}). 
Beeson \cite[page 9]{beeson} points out that if we combine Kreisel's modified realizability with the model  $\sf HEO$ of the  hereditarily effective operations then we obtain a realizability interpretation of $\ehaw+\ac$ into $\ha$ but we cannot force first-order formulas to be self-realizing. The problem is that in Kreisel's modified realizability the realizers are total object.  Beeson's solution is to consider a version of Kreisel's modified realizability for finite-type partial functionals. We propose a different solution (\footnote{Actually, our attempt to follow Beeson's strategy turned out to be amiss. The problem is the soundness. To make a long story short, partial functionals with extensional equality only with respect to total functionals do not behave well.}). We follow Goodman's blueprint and define a realizability notion which  combines the model $\sf HEO$ of the hereditarily effective operations  with an \emph{extensional} version of Kleene recursive realizability. Indeed, we define an \emph{extensional} equality between realizers of a formula $\vp$, denoted $p\fo (a,b)\colon \vp$, where $a$ and $b$ are natural numbers and $p$ is a partial function from natural numbers to natural numbers,  whose intended meaning is \lq$p$ forces $a$ and $b$ to be equal realizers of $\vp$\rq. By letting  $p\fo a\colon \vp$ be a shorthand for $p\fo (a,a)\colon \vp$, we therefore prove:
\begin{itemize}
	\item Soundness: for every sentence $\vp$ provable in $\ehaw+\ac+\dc$ there exists an index $a$ such that for every definable set $T$ of partial functions $\ha\vdash \forall p\in T\ (p\fo a\colon \vp)$
	\item Self-realizability: for every first-order sentence $\vp$ there exists a non-empty definable set $T$ of finite partial functions such that $\ha\vdash \forall p\in T\ \forall a\ ((p\fo a\colon \vp)\imp \vp)$.
\end{itemize}
Goodman  writes $\pair pa\fo \vp$. Our choice of notation $p\fo a\colon \vp$ arises from viewing $\vp$ as a type, the type of its realizers. 
It is worth  noticing that the indices in the proofs of both the soundness and the self-realizability theorems are exactly the ones in \cite[Theorems 1 and 4]{goodman}. The verification is straightforward, although burdensome. \\

The outline  of this paper is as follows. In Section \ref{arithmetic} we describe the system $\ehaw+\ac+\dc$.   In Section \ref{goodman} we review Goodman realizability.
In Section \ref{extensional} we present our version of Goodman realizability and prove the extensional case. Finally, we show in Section \ref{realizability} how to combine the model $\sf HEO$ with Kleene recursive realizability to obtain a realizability interpretation of  $\ehaw+\ac+\dc$ into $\ha$,  where the realizers are, as in Kreisel's modified realizability, total objects.  Actually,  one can define a Goodman version of this interpretation and prove its soundness in $\ha$. What fails in $\ha$, but not in $\pa$, is the self-realizability theorem, which makes this approach unsuitable to prove the conservativity of  $\ehaw+\ac+\dc$ over $\ha$. We discuss this further at the end of Section \ref{realizability}.

\section{Extensional finite-type arithmetic with choice}\label{arithmetic}

We assume familiarity with systems of arithmetic in all finite types (see, e.g.,\cite{kohl,troelstra}). 
Here, we formalize $\ehaw+\ac+\dc$ following Goodman \cite{goodman}. In particular, we have equality at every finite type.  We denote types by $A,B,C,\ldots$  $N$ denotes the type of natural numbers. As in Goodman, we also have product types $A\times B$.
\[   \text{Types } ::= N \mid  A\times B \mid A\to B \]
The language $\mcal L$ of finite-type arithmetic includes variables at all sensible types, the constant $0$ of type $N$, the constant $S$ for successor of type $N\to N$, functions symbols for all primitive recursive functions, constants for combinators $\varPi$, $\varSigma$,  recursors $R$, pairing $D$ and projections $D_0, D_1$ at all sensible types.
\[  \text{ Terms } ::= \text{variables} \mid \text{constants} \mid f(\alpha_1,\ldots,\alpha_k)\mid \alpha\beta \]
Here, $f$ is a function symbol for a primitive recursive function and $\alpha_i$ are terms of type $N$. In the application term $\alpha\beta$, $\alpha$ is a term of type $A\to B$ and $\beta$ is a term of type $A$. To be precise, for all types $A$ and $B$, we have  a binary function symbol $Ap$ of sort $(A\to B)\times A\to B$ . Therefore,
$\alpha\beta$ stands for $Ap(\alpha,\beta)$ (\footnote{Note that we are dealing with a many-sorted first-order theory. The types are indeed the sorts of the language.}).
We usually indicate the type of a term by a superscript as in $\alpha^A$.
For every type $A$ we have a binary relation symbol $=_A$ for equality of type $A$. 
\[  \text{ Formulas } ::= \alpha=_A\beta \mid \vp\land \psi\mid \vp\lor \psi \mid \vp\imp \psi \mid \exists x^A\vp  \mid \forall x^A\vp \]
Here, $\alpha$ and $\beta$ are terms of type $A$. As usual in systems of arithmetic, we write  $\neg\vp$ as an abbreviation for $\vp \imp 0=S0$. \\

Axioms and rules: 
\begin{enumerate}
	\item[] Propositional logic:
	\item[(1)] $\vp\imp\vp$
	\item[(2)] Modus ponens: if $\vp$ and $\vp\imp\psi$ then $\psi$
	\item[(3)] Syllogism: if $\vp\imp\psi$ and $\psi\imp\chi$ then $\vp\imp\chi$
	\item[(4)] $\vp\land\psi\imp\vp$, $\vp\land\psi\imp\psi$
	\item[(5)] $\vp\imp\vp\lor\psi$, $\psi\imp\vp\lor\psi$
	\item[(6)] If $\chi\imp\vp$ and $\chi\imp\psi$ then $\chi\imp\vp\land\psi$
	\item[(7)] If $\vp\imp\chi$ and $\psi\imp\chi$ then $\vp\lor\psi\imp\chi$
	\item[(8)] $\vp\land\psi\imp\chi$ iff $\vp\imp(\psi\imp\chi)$
	\item[(9)] $0=S0\imp\vp$
	\item[] Predicate logic:
	\item[(10)] If $\vp\imp\psi$ then $\vp\imp\forall x\psi$, for $x$ not free in $\vp$
	\item[(11)] If $\vp\imp\psi$ then $\exists x\vp\imp\psi$, for $x$ not free in $\psi$
	\item[(12)] $\forall x\vp\imp\vp(\alpha)$, $\vp(\alpha)\imp\exists x\vp$, for any term $\alpha$ free for $x$ in $\vp$
	\item[] Axioms for constants:
	\item[(13)] $\neg(0=Sx)$, $Sx=Sy\imp x=y$, for  $x,y$ variables of type $N$
	\item[(14)] Defining equations of primitive recursive functions 
	\item[(15)] Combinators: $\varPi xy=x$ and $\varSigma xyz=(xz)(yz)$ at all sensible types
	\item[(16)] Recursors: $Rxy0=x$ and $Rxy(Sz)=y(Rxyz)z$ at all sensible types
	\item[(17)] $D_0(D xy)=x$ and $D_1(Dxy)=y$, for $x$ of type $A$ and $y$ of type $B$
	\item[(18)] $x=D(D_0x)(D_1x)$, for  $x$ of type $A\times B$
	\item[] Induction:
	\item[(19)]  If $\vp(0)$ and $\vp(x)\imp\vp(Sx)$ then $\vp(x)$, for $x$ variable of type $N$
	\item[] Equality:
	\item[(20)] $x=x$, for $x$ of type $A$
	\item[(21)] $x=y\lor \neg (x=y)$, for $x$ and $y$ of type $N$
	\item[(22)]  Leibniz: $x= y\land \varphi(x)\imp \varphi (y)$, for $x$ and $y$ of type $A$
	\item[] Extensionality:
	\item[(23)] $\forall z(xz=yz)\imp x=y$, for $z$ of type $A$ and $x,y$ of type $A\to B$
	\item[] Axiom of choice:
	\item[(24)] $\forall x\exists y\varphi(x,y)\imp \exists z\forall x\varphi(x,zx)$, for $x$  of type $A$, $y$  of type $B$, and $z$  of type $A\to B$
	\item[] Axiom of relativized dependent choice:
	\item[(25)]  $\forall x[\vp(x)\imp \exists y(\vp(y)\land \psi(x,y))]\imp \forall x[\vp (x)\imp \exists z(z0=x\land \forall v\psi(zv,z(Sv)))]$, for $x$ and $y$  of type $A$, $z$  of type $N\to A$ and $v$  of type $N$. 
\end{enumerate} 
\smallskip 

$\ehaw$ is the system (1)--(23). Indeed, axiom $\neg(0=Sx)$ in (13) and axiom (21) are provable from the other axioms.  Similarly,  the axioms (14) are redundant (combinators and recursors enable us to define every primitive recursive function). The reason to include them is to see $\ha$ as a subsystem of $\ehaw$ (once we identify the function symbol $S$ and the type $N\to N$ constant $S$). Indeed, let us define $\ha$ to be the restriction of $\ehaw$ to first-order formulas. By a first-order  (arithmetical) formula we mean a formula of $\mathcal L$ which contains only first-order terms and quantifiers of type $N$, where a first-order term $\alpha$ is defined as follows:
\[   \alpha ::= x^N \mid 0 \mid S\alpha \mid f(\alpha_1,\ldots,\alpha_k) \]
In particular, in $\ha$ we do not have  axioms (15)--(18) and (23)--(24).\\   

Goodman \cite{goodman} defines $\haw$ without (axioms for) combinators and recursors. On the other hand, Goodman has decidable equality at every type, that is, axiom (21) holds with no restriction.   Goodman's formulation of $\haw$ is thus different from other versions present in the literature. Our version of $\ehaw$ has equality at all types and is equivalent to a conservative extension of Kohlenbach's $\ehaw$ \cite{kohl} (Kohlenbach's system does not have product types). Note that the restriction of axiom (21) to type $N$ is necessary, for otherwise we would have excluded middle for all formulas (by proving that for every formula $\vp$ with free variables $x_1,\ldots,x_k$ there exist closed terms $\alpha$ and $\beta$ such that $\vp(x_1,\ldots,x_k) \biimp \alpha x_1\ldots x_k=_A \beta x_1\ldots x_k$, where the type $A$ depends on $\vp$). 

\subsection{Notation}
As usual, we denote by $\bar n$ the numeral corresponding to the natural number $n$.  We omit the overline notation when clear from context.

Kleene application. Let $\kl a(n)$ be Kleene bracket application. As usual, we think of $a$ as a code for a partial recursive function from $\N$ to $\N$. Similarly, let $\kl a^p(n)$ be Kleene bracket application relative to the oracle $p$. Throughout the paper $p$ is a partial function from $\N$ to $\N$. To further ease notation,  we write, e.g.,  $ab$ for $\kl a(b)$ and $a^pb$ for $\kl a^p(b)$. More in general, we write $an_1\ldots n_k$ for $(an_1\ldots n_{k-1})n_k$. Similarly for $a^pn_1\ldots n_k$. For instance, $a^pbc$ stands for $(a^pb)^pc$.  We might write $a^p(b,c)$ in the interest of readability.

We tacitly assume  the s-m-n theorem and the recursion theorem in $\ha$ (\footnote{ s-m-n theorem: For all standard $m,n$ there is a primitive recursive function $s$ such that $\ha$ proves \[\kl a(x_1,\ldots,x_m,y_1,\ldots,y_n)\simeq \kl {s(a,x_1,\ldots,x_n)}(y_1,\ldots,y_m). \]
	Recursion theorem: $\ha$ proves that for all $a,n$ there is $e$ such that \[ \kl a(e,x_1,\ldots,x_n)\simeq \kl e(x_1,\ldots, x_n).\]}).

Kleene equality. 
Write $an=bm$ if $an$ and $bm$ are both defined and equal. Write $an\simeq bm$ iff either $an$ and $bm$ are both undefined or else $an=bm$.

By $\ind n.f(n)$ we denote a canonical index for the partial recursive function  $f$. In general, $\ind n_1n_2\ldots n_k.f(n_1,\ldots,n_k)$ stands for $\ind n_1.(\ldots (\ind n_k.f)\ldots)$.  

Types. To save on parentheses we agree that $\times$ binds stronger that $\to$. Thus $A\times B\to C$ stands for $(A\times B)\to C$. We use association to the right. Let $A_1\to A_2\to \ldots\to A_k$ be $A_1\to (A_2\to \ldots (A_{k-1}\to A_k)\ldots)$ and  $A_1\times\ldots \times A_k$ be 
$A_1\times (A_2\times \ldots (A_{k-1}\times A_k)\ldots )$. 

By $x_1,\ldots,x_k$ we denote a list of $k$ variables. Similarly, by $n_1,\ldots,n_k$ we denote a list of $k$ natural numbers. This includes the empty list. Say for $k=0$. 

Tuples. Let $\langle\cdot,\cdot\rangle$ be any primitive recursive bijection from pairs of natural numbers to natural numbers  with primitive recursive projections $(\cdot)_0$ and $(\cdot)_1$. Let $\langle n_1,\ldots,n_k\rangle$ be  $\langle n_1,\langle n_1,\ldots,n_k\rangle\rangle$. Note that a natural number $n$ codes a tuple of length $k$ for any given $k$. To ease notation, we assume a primitive recursive coding of sequences of natural numbers with primitive recursive projections $(\cdot)_i$ for any natural number $i$. We thus identify the tuple $\langle n_1,\ldots, n_k\rangle$ with the corresponding sequence of length $k$ and  write $(n)_i$ for the $i$-th component of the sequence. For example, if $n=\langle n_1,n_2,n_3\rangle$, we write $(n)_2$ for $n_2$ instead of $((n)_1)_0$.

\section{Goodman realizability: review}\label{goodman}

We review Goodman realizability \cite{goodman} for $\ha^\omega+\ac+\dc$.  Note that the formalization of $\sf HRO$ in $\ha$ gives an interpretation $|\vp|$ of $\haw$ into $\ha$. In particular, for every sentence $\vp$ provable in $\haw$ there exists a number $a$ such that  $\ha\vdash a\re\ |\vp|$, where $a\re\ \vp$ is Kleene recursive realizability (\footnote{Actually we should write $\ha\vdash \bar a\re\ \vp$. We omit such subtleties.}).
Therefore:
\[ \haw \vdash \vp\ \ \xrightarrow{\sf HRO}\ \    \ha\vdash |\vp|\ \  \xrightarrow{\text{Kleene}}\ \  \ha\vdash a\re\ |\vp|\]
Since $|\ac|$ and $|\dc|$ are Kleene realizable, provably in $\ha$, we have that:

\[    \haw+\ac+\dc \vdash \vp\ \  \xrightarrow{{\sf HRO}\ +\ \text{Kleene}}\ \  \ha \vdash a\re\ |\vp| \] 

Goodman's realizability bundles together the above interpretations and combines them with a Kripke forcing relation. Let us see some details.  \\

Let  $T$ be a set of partial functions from natural numbers to natural numbers. We define $p\fo a\colon \vp$, where $p\in T$, $a$ is a number and $\vp$ is a formula of $\mcal L$. We tacitly quantify $p,q,r$ over $T$. Warning: the discussion is informal. See for instance Kleene recursive realizability vs its formalized version in $\ha$.  Note that in $\ha$ we cannot even talk about arbitrary partial functions. At the end of the section we state the formalized versions from which Goodman's theorem follows.   \\

(1) First, one defines $p\fo a\in A$, where $a$ is a number, $p\in T$ and $A$ is a type.  We only mention the clause for the arrow type.
\begin{itemize}
	\item $p\fo a\in A\to B$ if for every $q\supseteq p$ and for every number $n$ such that $q\fo n\in A$, there exists $r\supseteq q$ such that $a^rn$ is defined and $r\fo a^rn\in B$.
\end{itemize}
This is a (relativized) version of $\sf HRO$. The forcing relation is monotone: if $p\fo a\in A$ and $q\supseteq p$ then $q\fo a\in A$.

(2) Second, for any $p\in T$, let $\mathcal L_p$ be the language $\mcal L$ of $\ha^\omega$ augmented with constants $F^A_a$ of type $A$, for every number $a$ and every type $A$  such that $p\fo a\in A$. Note that $\mcal L_p\subseteq \mcal L_q$ if $p\subseteq q$. Then one defines the interpretation (value) $|\alpha|_p$ of a closed term $\alpha$ of $\mcal L_p$. This can be undefined. We only mention the clause for an application term $\alpha\beta$.
\begin{itemize}
	\item $|\alpha\beta|_p\simeq \kl {|\alpha|_p}^p(|\beta|_p)$
\end{itemize}
The value is monotone, that is, if $p\subseteq q$ and $|\alpha|_p$ is defined, then $|\alpha|_q$ is defined and $|\alpha|_p=|\alpha|_q$.

(3) Goodman realizability. Finally, one defines $p\fo a\colon \vp$, where $p\in T$, $a$ is a number, and $\vp$ is a sentence of $\mcal L_p$. 
\begin{itemize}
	\item $p\fo a\colon \alpha=\beta$ iff $|\alpha|_p$ and $|\beta|_p$ are both defined  and $|\alpha|_p=|\beta|_p$
	\item $p\fo a\colon \vp\land \psi$ iff $p\fo {(a)_0}\colon \vp$ and $p\fo  (a)_1\colon \psi$ 
	\item $p\fo a\colon \vp\lor\psi$ iff $(a)_0=0$ and $p\fo (a)_1\colon \vp$ or else $(a)_0=1$ and $p\fo {(a)_1}\colon \psi$
	\item $p\fo a\colon \vp\imp\psi$ if for every $q\supseteq p$ and for every number $b$ such that $q\fo b\colon \vp$, there exists $r\supseteq q$ such that $a^rb$ is defined and $r\fo a^rb\colon\psi$ 
	\item $p\fo a\colon \exists x^A\vp$ iff $p\fo (a)_0\in A$ and $p\fo {(a)_1}\colon \vp(F_{(a)_0})$
	\item $p\fo a\colon \forall x^A\vp$ iff for every $q\supseteq p$ and for every number $n$ such that $q\fo n\in A$, there exists $r\supseteq q$ such that $a^rn$ is defined and $r\fo {a^rn}\colon \vp(F_n^A)$ 	
\end{itemize} 

\begin{theorem}[Soundness of Goodman realizability, cf.\ {\cite[Theorem 1]{goodman}}]
	Suppose $\vp$ is a formula of $\mcal L$ such that $\haw+\ac+\dc\vdash \vp$ and let $T$ be a set of partial functions.  If the free variables of $\vp$ are among the distinct variables $x_1,\ldots,x_k$, then
	there exists an index $a$ such that $p\fo a\colon \forall x_1\ldots\forall x_k\vp$ for every $p\in T$.
	In particular, if $\vp$ is a sentence, then there is an index $a$ such that $p\fo a\colon \vp$ for every $p\in T$.   
\end{theorem}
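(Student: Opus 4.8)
The plan is to argue by induction on the length of a derivation of $\vp$ in $\haw+\ac+\dc$, producing at each step an explicit index built uniformly from the indices furnished by the induction hypothesis. Throughout, the two tools doing the real work are the recursion and s-m-n theorems of $\ha$ (to manufacture the indices) and the monotonicity properties recorded in (1) and (2) above (to move realizers and term-values along extensions of conditions). Since the conclusion concerns the universal closure $\forall x_1\ldots\forall x_k\,\vp$, I would first isolate a lemma that reduces it to the open case: a realizer of the closure must, given any $q\supseteq p$ and any $n_i$ with $q\fo n_i\in A_i$ (fed in one at a time, each possibly forcing a further extension), return a realizer of the instance $\vp(F_{n_1},\ldots,F_{n_k})$. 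With that lemma in hand, the remaining cases may be argued about $\vp$ with its free variables instantiated by constants $F_{n_i}$.

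For the base cases I would exhibit a realizer for each axiom. The propositional axioms and rules have the expected combinatory realizers (projections for $\land$, tagged injections for $\lor$, composition for syllogism), and the arithmetical and equality axioms — including decidable equality, which in Goodman's $\haw$ holds at every type — are realized by trivial indices, since the atomic clause only asks that the two term-values $|\alpha|_p,|\beta|_p$ be defined and equal. The axiom of choice is essentially self-realizing: a realizer of $\forall x\exists y\,\vp(x,y)$ already packages, for each input, a pair whose first component realizes the witness, so projecting off that component yields the required choice function $z$, and the realizer of $\ac$ is a minor rearrangement of the identity. The two genuinely recursive cases are induction and $\dc$: for induction I would use the recursion theorem to define an index $g$ with $g(0)$ the given realizer of $\vp(0)$ and $g(Sn)$ the result of applying the step realizer to $g(n)$; for $\dc$ I would likewise iterate the step realizer to build a realizer of the sequence $z$, seeded by the start value.

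For the inductive step I would treat each rule by composing the indices supplied for its premises. Modus ponens and syllogism amount to applying the realizer of an implication to the realizer of its antecedent; the implication clause only guarantees a realizer at some extension $r\supseteq q$, so the new index must, with oracle the current condition, locate that extension and carry the realizer of the conclusion along — here monotonicity of both the forcing relation and the term-values is exactly what licenses the reindexing. The quantifier rules (generalization, the rule for $\exists$ in the antecedent, and instantiation) are handled by inserting or stripping the universal and existential clauses, again invoking monotonicity to pass realizers along a chain $p\subseteq q\subseteq r$.

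The hard part is not any single case in isolation but keeping the forcing bookkeeping consistent through the two iterative constructions. In the induction and $\dc$ cases the step realizer returns its output only on \emph{some} extension of the condition it is handed, so iterating it produces a growing, input-dependent tower of conditions; the recursion-theorem fixed point must be written so that the index computed with oracle $p$ faithfully threads these extensions while staying defined wherever the clauses demand a value, and so that the whole construction remains uniform in $p\in T$. Verifying that this threaded index genuinely meets the $\forall$- and $\imp$-clauses — rather than meeting them only after passing to some further extension — is the delicate point, and is precisely where the explicit indices of \cite[Theorem 1]{goodman} are reused. Everything else reduces to the routine, if tedious, unwinding of the realizability clauses.
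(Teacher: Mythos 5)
Your plan is correct and follows essentially the same route as the paper's proof (Theorem \ref{sound}, which uses Goodman's indices from \cite[Theorem 1]{goodman}): induction on the derivation, a reduction lemma for the universal closure (the paper's property (S)), trivial realizers for the atomic axioms, projection realizers for $\ac$, the recursion theorem to build the iterated indices for induction and $\dc$, and monotonicity of forcing and term-values to thread realizers along extensions, with the index uniform in $p\in T$. One small remark: the $\forall q\supseteq p\,\exists r\supseteq q$ shape of the implication and quantifier clauses means a realizer is only ever required to converge and realize at \emph{some} extension, so the delicate point you single out is exactly what the definition is engineered to absorb rather than an extra obligation.
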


\begin{theorem}[Self-realizability of first-order formulas, cf.\ {\cite[Theorem 4]{goodman}}]\label{goodman self}
	Suppose $\vp$ is a first-order formula of $\mcal L$ with free variables among the distinct variables $x_1,\ldots,x_k$.  Then there exist a nonempty set $T$ of finite partial functions and an index $a$ such that:
	\begin{itemize}
		\item if $\vp(\bar n_1,\ldots,\bar n_k)$ is true, then for every $p\in T$ there exists  $q\supseteq p$ such that  $a^qn_1\ldots n_k$ is defined and $q\fo a^qn_1\ldots n_k \colon \vp(\bar n_1,\ldots,\bar n_k)$.
		\item  if $p\fo b\colon \vp(\bar n_1,\ldots,\bar n_k)$, then  $\vp(\bar n_1,\ldots,\bar n_k)$ is true.
	\end{itemize} 
	In particular, if $\vp$ is a first-order sentence of $\mcal L$, then there exists a nonempty set $T$ of finite partial functions such that: 
	\begin{itemize}
		\item  if $\vp$ is true, then for every $p\in T$ there exists  $q\supseteq p$ and a number  $a$  such that   $q\fo a\colon \vp$
		\item if $p\fo b\colon\vp$, then  $\vp$ is true
	\end{itemize}   
\end{theorem}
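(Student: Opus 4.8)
The plan is to prove the two bullet points \emph{together}, by induction on the first-order formula $\vp$, producing at each stage both the set $T=T_\vp$ and the index $a=a_\vp$. I will call the first bullet the \emph{completeness} direction (truth forces a realizer) and the second the \emph{self-realizing} direction (a realizer entails truth). These are genuinely interdependent: at an implication, completeness for $\vp\imp\psi$ must invoke self-realizing for $\vp$ together with completeness for $\psi$, whereas self-realizing for $\vp\imp\psi$ invokes them the other way around. So a single simultaneous induction is forced on us rather than two separate ones.

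To set up $T_\vp$ I would fix a primitive recursive coding that hands each \emph{occurrence} of an existential subformula $\exists y\,\theta$ of $\vp$ a private block of oracle ``slots'', indexed by the code of the occurrence together with a tuple of numerical parameters $\vec m$. A finite partial function $p$ is declared to lie in $T_\vp$ exactly when every value it records is a \emph{true witness}: if $p$ is defined at the slot reserved for $\exists y\,\theta$ at parameters $\vec m$, then $\theta(\vec m, p(\text{slot}))$ holds. The empty function qualifies vacuously, so $T_\vp\neq\emptyset$. Two closure properties, both coming from the disjointness of the slot blocks, form the backbone of the argument: first, $T_\vp\subseteq T_\psi$ for every subformula $\psi$ (fewer existentials impose fewer constraints); second, extending a condition of $T_\vp$ by any true witness again lands in $T_\vp$. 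These are precisely what align the ``for every $q\in T$'' appearing in the forcing clauses for $\imp$ and $\forall$ with the inductive hypotheses, which speak about the sets attached to the \emph{sub}formulas.

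The base and propositional cases are routine. For an atomic $\alpha=\beta$ with numerals substituted, both sides are closed first-order terms whose values are independent of the oracle and equal to the genuine arithmetical values, so $p\fo a\colon\alpha=\beta$ holds iff $\alpha=\beta$ is true, in either direction and for every $p$; conjunction and disjunction combine realizers and slot-blocks componentwise; implication (hence negation, taking $\psi\equiv 0=S0$, which is never realized) and the universal quantifier use the cross-feeding of the two hypotheses described above, with the new index assembled from the inner indices via the recursion theorem and s-m-n so as to stay uniform in $\vec n$. The crux is the existential quantifier in the completeness direction. Given that $\exists x\,\chi(x,\vec n)$ is true, I must exhibit, uniformly in $\vec n$ and above an arbitrary $p\in T_\vp$, one extension $q$ and a realizer $a^q\vec n=\pair w c$ with $w$ a witness and $c$ a realizer of $\chi(\bar w,\vec n)$ \emph{at $q$ itself}. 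Since truth of $\chi$ need not be decidable, the index $a$ cannot search for $w$; it must \emph{read} it from the oracle. This is where the forcing machinery is indispensable: I would take $q$ to be the extension delivered by the completeness hypothesis for $\chi(\bar w,\vec n)$, further augmented by recording the true witness $w$ in the slot reserved for this occurrence at parameters $\vec n$, so that $a^q\vec n$ recovers $w$ from that slot and then runs the inner index $a_\chi$ under oracle $q$ to produce $c$. The self-realizing direction for the existential is, by contrast, immediate: a realizer exposes its witness as $(a)_0$, and the inductive hypothesis for $\chi$ applies to it directly.

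The main obstacle I expect is exactly this uniform bookkeeping, in three linked forms: (i) arranging a single index $a$ that succeeds for every parameter tuple and every condition at once, which is what forces all witness-retrieval through the oracle rather than through search; (ii) verifying that the extensions produced along the induction never leave $T_\vp$, which is what the disjoint-slot correctness condition is designed to guarantee; and (iii) carrying the construction out so that it is provable in $\ha$, where one cannot quantify over arbitrary partial functions and must instead manipulate the formalized forcing relation together with $T_\vp$ as a definable class of \emph{finite} conditions. The informal induction sketched here is the skeleton; the genuine labor lies in checking that each clause, and in particular the existential storage-and-retrieval step, survives this formalization.
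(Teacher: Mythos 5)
Your plan goes wrong at disjunction, which you dismiss as routine (``conjunction and disjunction combine realizers and slot-blocks componentwise''). Disjunction has exactly the same non-computability problem that you correctly identify for $\exists$: a realizer of $\psi_0\lor\psi_1$ must carry a selection bit $(a)_0\in\{0,1\}$ pointing at a disjunct that is actually forced (hence, by your self-realizing direction, true), and this bit is not computable from the parameters, so it too must be read off the oracle. Since your $T_\vp$ reserves slots only for existential subformulas, the completeness direction for disjunction is not just unproved but false. Concretely, take $\vp(x)$ to be $\forall y\,\neg T(x,x,y)\lor\neg\forall y\,\neg T(x,x,y)$, with $T$ the Kleene predicate (available as a primitive recursive function symbol). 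This formula has no existential subformulas, so your $T_\vp$ imposes no constraints at all, yet $\vp(\bar n)$ is true for every $n$. Suppose a single index $a$ satisfied your completeness clause. Given $n$, search for any finite condition $q_0$ with $a^{q_0}n$ defined; one exists by the clause applied to $p=\emptyset$, and the search halts. By monotonicity of oracle computation, $a^{q}n=a^{q_0}n$ for every $q\supseteq q_0$; applying the completeness clause again with $p:=q_0$ produces some $q\supseteq q_0$ at which this common value is a genuine (forced, hence correct) realizer, so the bit $(a^{q_0}n)_0$ correctly decides whether $\{n\}(n)$ halts. That is a decision procedure for the halting problem, a contradiction. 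The repair is exactly what the paper does: $T$ reserves slots for disjunctive subformulas as well, constrained so that a recorded bit always points at a true disjunct, and the index for $\psi_0\lor\psi_1$ reads that bit from the oracle, extending the condition when the slot is empty --- in perfect parallel with your storage-and-retrieval treatment of $\exists$.

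A secondary structural flaw: assigning a separate set $T_\psi$ to each subformula and invoking $T_\vp\subseteq T_\psi$ does not ``align'' the inductive hypotheses, because the forcing relation is relative to the ambient set of conditions. The clauses for $\imp$ and $\forall$ quantify over extensions lying in $T$ and demand witnesses lying in $T$, so $p\fo b\colon\psi$ taken relative to $T_\psi$ and taken relative to $T_\vp$ are different relations, and neither implies the other in general. The paper sidesteps this entirely: it fixes once and for all a single $T$, defined from an enumeration of \emph{all} subformulas of $\vp$ (with slots for both $\lor$ and $\exists$ occurrences), and runs the induction on subformulas with forcing always relative to that one set. Your atomic case, the cross-feeding of the two directions at implication, and your existential case do match the paper's proof; the disjunction case and the single-$T$ bookkeeping are what need to be fixed.
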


In the self-realizability theorem we agree on letting $a^p$  be, say, $a^p0$ if the list $x_1,\ldots,x_k$ is empty. Alternatively, read $a^pn_1\ldots n_k$ as $a^p0n_1\ldots n_k$. 
\smallskip

Goodman's theorem follows by formalizing the above proofs in $\ha$. A set $T$ of finite partial functions is definable in $\ha$ if there exists a formula $\psi(x)$ of $\ha$  such that for any finite partial function $p$,  $p\in T$ iff $\psi(\bar p)$ is true. Here, we assume some fixed encoding of finite partial functions with natural numbers.

\begin{theorem}[Soundness in $\ha$, cf.\ {\cite[Theorem 5]{goodman}}]
	Suppose $\vp$ is a sentence of $\mcal L$ such that $\haw+\ac+\dc\vdash \vp$ and let $T$ be a definable set of  finite partial functions.  Then there exists an index $a$ such that 
	\[ \ha\vdash \forall p\in T\  (p\fo a\colon\vp). \]
\end{theorem}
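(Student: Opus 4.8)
The plan is to observe that the entire informal argument establishing the preceding soundness theorem is formalizable in $\ha$, with the very same indices $a$. The heart of the matter is that, for each fixed sentence $\vp$ of $\mcal L$ and each definable set $T$ of finite partial functions, the relation $p\fo a\colon\vp$ is expressible by an arithmetical formula. The proof then proceeds by (meta-level) induction on a $\haw+\ac+\dc$-derivation of $\vp$: for each axiom instance we exhibit an index $a$ and check that $\ha\vdash\forall p\in T\,(p\fo a\colon\vp)$, and for each rule we check that $\ha$ proves that forcedness is preserved, the index for the conclusion being computed from those of the premises via the $\mathsf{s}$-$\mathsf{m}$-$\mathsf{n}$ and recursion theorems. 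As in the informal soundness theorem, intermediate formulas with free variables $x_1,\ldots,x_k$ are handled through their universal closures $\forall x_1\ldots\forall x_k\vp$, each yielding an $\ha$-provability claim.

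First I would carry out the formalization. Since $T$ is definable by some $\ha$-formula $\psi(x)$ and consists of finite partial functions coded by natural numbers, every quantifier \lq for every $q\supseteq p$ with $q\in T$\rq\ and \lq there exists $r\supseteq q$ with $r\in T$\rq\ occurring in clauses (1)--(3) becomes an ordinary number quantifier relativized to $\psi$. By recursion on types one expresses $p\fo a\in A$; by recursion on the term structure of $\mcal L_p$ one expresses the graph of the partial value $|\alpha|_p$ (folding the constants $F^A_n$ into parameters, with \lq$|\alpha|_p$ defined\rq\ read as an existential over this graph); and finally, by recursion on the logical structure of $\vp$, one expresses $p\fo a\colon\vp$. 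For each fixed $\vp$ this is a finite metatheoretic recursion producing a single $\ha$-formula, so no coding of syntax or extra induction is needed. Next I would record the basic lemmas used throughout, each provable in $\ha$: monotonicity of $p\fo a\in A$, of $|\alpha|_p$, and of $p\fo a\colon\vp$ in the parameter $p$; and the availability of the $\mathsf{s}$-$\mathsf{m}$-$\mathsf{n}$ and recursion theorems, which the paper already assumes $\ha$ to prove. With these in hand, the informal verification of each logical axiom (1)--(12), the constant axioms (13)--(18), the induction rule (19), the equality axioms (20)--(22), and the choice axioms (24)--(25) transcribes line by line into an $\ha$-proof of the corresponding $\forall p\in T\,(p\fo a\colon\vp)$, using exactly the indices of \cite[Theorem 1]{goodman}.

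The main obstacle is the bookkeeping for the two clauses that carry nested alternating quantifiers, namely $\imp$ and $\forall x^A$, whose forcing formulas have the shape $\forall q\supseteq p\,\exists r\supseteq q\,(\cdots)$. Verifying the induction rule (19) and, above all, the axioms of choice $\ac$ and relativized dependent choice $\dc$ requires internal inductions in $\ha$ on these forcing formulas: for $\dc$ one builds the witness of type $N\to A$ by the recursion theorem and then proves, by induction on the type-$N$ parameter, that the relevant instance is forced at every stage. This is exactly where the proof would break down over subsystems with restricted induction; the point to stress is that the full induction schema of $\ha$ applies to formulas of arbitrary complexity, so these internal inductions go through without any bound on the quantifier complexity of the forcing relation. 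The remaining verifications (monotonicity, combinators, recursors, pairing, and decidable equality) are routine once the formalization is in place, so after the formalization step the work is essentially the burdensome but straightforward transcription alluded to in the introduction.
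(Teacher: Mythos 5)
Your proposal is correct and follows exactly the route the paper intends: the paper gives no separate proof of this theorem but simply states that it "follows by formalizing the above proofs in $\ha$" (with the indices taken from the proof of the informal soundness theorem, i.e.\ those of \cite[Theorem 1]{goodman}), which is precisely your plan of expressing $p\fo a\colon\vp$ arithmetically for each fixed $\vp$ and definable $T$, and then transcribing the meta-level induction on the derivation into $\ha$-proofs. Your added observations --- that no uniform coding of syntax is needed since the recursion is metatheoretic for fixed $\vp$, that extensionality (23) is rightly excluded, and that the internal inductions for (19) and $\dc$ need the full induction schema of $\ha$ --- are accurate elaborations of what the paper leaves implicit.
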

\begin{theorem}[Self-realizability in $\ha$,  cf.\ {\cite[Theorem 6]{goodman}} ]
	Suppose $\vp$ is a first-order sentence of $\mcal L$. Then there exists a definable set $T$ of finite partial functions such that:
	\begin{itemize}
		\item $\ha\vdash \exists p\ (p\in T)$
		\item $\ha\vdash \vp\imp \forall p\in T\ \exists q\in T\ \exists a\ (q\supseteq p\land q\fo a\colon \vp)$
		\item $\ha\vdash \forall p\in T\ \forall b\ ((p\fo b\colon \vp)\imp \vp)$ 
	\end{itemize}
\end{theorem}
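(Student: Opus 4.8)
The plan is to formalize, step by step inside $\ha$, the proof of Theorem~\ref{goodman self}. Since $\vp$ is first-order it is literally a sentence of $\ha$, so the three displayed assertions are genuine $\ha$-statements once we check that the forcing relation occurring in them is expressible in $\ha$. For a fixed first-order $\vp$ this holds: the clauses for $=$, $\land$, $\lor$ and $\exists$ are arithmetic outright (on closed first-order terms the value $|\alpha|_p$ is just the oracle-independent value of $\alpha$, always defined), while the clauses for $\imp$ and $\forall$ prepend one bounded block $\forall q\supseteq p\,\exists r\supseteq q$ with $q,r$ ranging over the set $T$ fixed below. As $\vp$ has fixed finite depth, only finitely many such blocks are nested, so $p\fo a\colon\vp$ is arithmetic, hence a formula of $\ha$.

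For $T$ I take the explicit set used in the proof of Theorem~\ref{goodman self}: finite partial functions that record witnesses for the existential subformulas of $\vp$, keying a code of an existential subformula $\exists y\,\theta$ together with a tuple $\bar n$ of number parameters to a witness for $\theta(\bar n,\cdot)$. Since $\vp$ has only finitely many subformulas, each a specific arithmetic formula, this $T$ is $\ha$-definable in the required sense. The empty condition belongs to $T$ and its code is a fixed numeral, so $\ha\vdash\exists p\,(p\in T)$ is immediate; this is the first bullet.

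The second and third bullets are the completeness (truth-to-forceability) and soundness (forceability-to-truth) halves of Theorem~\ref{goodman self}. I prove them together by a single induction on the subformulas of $\vp$, generalised to open formulas with a universal closure over the number parameters, exactly as in the statement of Theorem~\ref{goodman self}. For each subformula $\theta$ this yields $\ha$-proofs of $\theta(\bar n)\imp\forall p\in T\,\exists q\in T\,\exists a\,(q\supseteq p\land q\fo a\colon\theta(\bar n))$ and of $\forall p\in T\,\forall b\,((p\fo b\colon\theta(\bar n))\imp\theta(\bar n))$, both universally quantified over $\bar n$. The atomic case holds because atomic forcing coincides with the $\ha$-truth of $\alpha=\beta$; the cases for $\land$ and $\lor$ decompose the realizer; and the quantifier and implication cases use the recursion theorem in $\ha$ (assumed available), the realizer reading the recorded witnesses off the oracle. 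Crucially the two halves are intertwined, since soundness for $\vp\imp\psi$ invokes the completeness hypothesis for $\vp$, so they must be carried through the induction simultaneously. Taking $\theta=\vp$, which has no parameters, gives the second and third bullets.

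The main obstacle is the completeness step for $\imp$ and $\forall$. There one must verify, in $\ha$, that from an arbitrary condition $r\in T$ one can always pass to an extension $s\in T$ recording witnesses for the true existential subformulas met along the way, and that a single index $a$, fixed once via the recursion theorem and uniform in both the oracle and the parameters, reads exactly those witnesses back and reassembles a realizer for $\theta(\bar n)$. Making the genericity of $T$ (such witness-extensions always exist and remain in $T$) cohere with this uniform behaviour of $a$, while keeping both $T$ and the forcing relation arithmetic, is the delicate point. Once it is in place, checking that $a$ is the index of \cite[Theorem~4]{goodman} is routine.
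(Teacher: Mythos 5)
Your overall blueprint matches the paper's: a single simultaneous induction on the subformulas of $\vp$, proving the truth-to-forcing and forcing-to-truth halves together (the implication case does indeed need both), with one index per subformula fixed in advance by the recursion theorem, uniform in the parameters and the oracle, and with the forcing conditions supplying the non-computable information; this is exactly the formalization of Theorem \ref{goodman self}, whose proof the paper spells out (for the extensional variant) as Theorem \ref{self}. However, your set $T$ is too small, and this is a genuine gap rather than a presentational one: you let conditions record witnesses only for the \emph{existential} subformulas of $\vp$, and you then dismiss disjunction with ``the cases for $\land$ and $\lor$ decompose the realizer''. That handles only the forcing-to-truth half. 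In the truth-to-forcing half, the index for a disjunctive subformula $\psi_0\lor\psi_1$ must output a pair whose first coordinate is a correct selector bit ($p\fo a\colon\psi_0\lor\psi_1$ requires $(a)_0\in\{0,1\}$ to name a disjunct that is actually forced), and --- as you yourself insist for the $\imp$ and $\forall$ cases --- this index must be uniform in the parameters $\bar n$ and the oracle, because it is what the indices of enclosing implications and universal quantifiers call; the inner $\exists a$ in the final $\ha$-statement does not excuse the induction from this uniformity. With your $T$ the oracle carries no information about which disjunct is true, and no such uniform index can exist: take $\vp=\forall x\,\bigl(\exists y\,\theta(x,y)\lor\neg\exists y\,\theta(x,y)\bigr)$ with $\theta$ primitive recursive and $\exists y\,\theta(x,y)$ $\Sigma^0_1$-complete. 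Under the hypothesis $\vp$, a correct selector as a function of $x$ is not computable relative to any condition in your $T$: conditions can only record witnesses for \emph{true} instances of $\exists y\,\theta(x,y)$, a query at an argument outside the domain simply diverges (so absence of a witness can be neither detected nor taken as evidence of falsity), and racing the two inductively given indices fails because convergence of an index does not imply that it forces anything.

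This is precisely the subtlety the paper flags in item 6 of the Remarks in Section \ref{goodman}: for a disjunction, the selector ``can be chosen neither beforehand'' nor computed --- it must be read off the forcing condition. Accordingly, the paper's $T$ (proof of Theorem \ref{self}, following Goodman) keys conditions on \emph{both} kinds of subformulas: at $\langle j,n_1,\ldots,n_{k_j}\rangle$ with $\vp_j=\psi_0\lor\psi_1$ the condition records a bit $i$ such that $\psi_i(\bar n_1,\ldots,\bar n_{k_j})$ is true, and at $\langle j,n_1,\ldots,n_{k_j}\rangle$ with $\vp_j=\exists x\,\psi$ it records a true witness; the index for a disjunction then reads the bit from the oracle exactly as the index for an existential reads the witness. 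Note that extending a condition by the correct bit is intuitionistically unproblematic: under the assumption $\psi_0(\bar n)\lor\psi_1(\bar n)$, $\ha$ case-splits and in each case records the corresponding value, staying inside $T$. Once you enlarge $T$ in this way, the rest of your argument goes through as the paper's does.
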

\begin{theorem}[Goodman's theorem, cf.\ {\cite[Theorem 7]{goodman}}]
	Suppose  $\vp$ is a first-order sentence of $\mcal L$. If $\haw+\ac+\dc\vdash \vp$ then $\ha\vdash \vp$.
\end{theorem}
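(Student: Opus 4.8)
The plan is to obtain the conservativity as a direct corollary of the two preceding formalized theorems, Self-realizability in $\ha$ and Soundness in $\ha$; the statement is essentially their assembly. First I would apply the Self-realizability theorem to the given first-order sentence $\vp$. This produces a \emph{definable} set $T$ of finite partial functions for which $\ha$ proves all three clauses, in particular nonemptiness, $\ha\vdash\exists p\ (p\in T)$, and the self-realizing clause $\ha\vdash\forall p\in T\ \forall b\ ((p\fo b\colon\vp)\imp\vp)$.

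With this $T$ now fixed, and since by hypothesis $\haw+\ac+\dc\vdash\vp$, I would feed $T$ into the Soundness in $\ha$ theorem, which applies to \emph{any} definable set of finite partial functions. This yields an index $a$ with $\ha\vdash\forall p\in T\ (p\fo a\colon\vp)$.

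It remains to reason inside $\ha$. From nonemptiness choose a witness $p\in T$; soundness gives $p\fo a\colon\vp$ for this $p$; and instantiating the self-realizing clause with $b:=a$ converts $p\fo a\colon\vp$ into $\vp$. Therefore $\ha\vdash\vp$, as required. The one point needing attention---more bookkeeping than genuine obstacle---is that both theorems must be applied to the \emph{same} $T$; this dictates the order, since $T$ is produced by Self-realizability and merely consumed by Soundness. All the real work resides in those two theorems, so the present statement is their straightforward corollary.
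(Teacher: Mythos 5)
Your proposal is correct and takes essentially the same route as the paper: Goodman's theorem is presented there precisely as the assembly of the formalized Soundness and Self-realizability theorems, applying Soundness to the very set $T$ produced by Self-realizability and then reasoning inside $\ha$ exactly as you do (nonemptiness gives some $p\in T$, Soundness gives $p\fo a\colon\vp$, and the self-realizing clause with $b:=a$ yields $\vp$). You also correctly identify the one organizational point---that $T$ must be the same in both applications, forcing the order of use---and correctly leave the second clause of Self-realizability unused, as it is not needed for this direction.
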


\subsection{Remarks}

1. For  $T=\{p\}$,  we have $p\fo a\in A$ iff $a\in H_p^{A}$, where \[\mathsf{HRO}_p=\{H_p^{A}\colon A \text{ type}\}\]  is the model of hereditarily recursive operations relative to $p$.  In particular, $|\alpha|_p$ is the interpretation of $\alpha$ in ${\sf HRO}_p$.

2. For $T=\{p\}$,  Goodman realizability is essentially the $\sf HRO$ interpretation of $\haw$ into $\ha$  followed by Kleene recursive realizability relative to $p$.

3. The statement of \cite[Theorem 1]{goodman} is slightly different. Besides, it follows from the proof that we can actually choose $a$ in advance, that is, the index $a$ does not depend on $T$ but only on the proof of $\vp$. See our Theorem \ref{sound}.

4. The statement of \cite[Theorem 4]{goodman} is quite different from Theorem \ref{goodman self}. Moreover, Goodman's argument involves a rather complicated induction on the formula $\vp$.  A more straightforward argument is as in the proof of \cite[Lemma 4.1]{beeson}.  See also our proof of Theorem \ref{self}. 

5.  Goodman \cite[Theorem 5]{goodman} states that for every provable sentence $\vp$ and for every definable  set $T$ of finite partial functions there is an index $a$ such that
\[   \ha \vdash  \forall p\in T\ \exists q\in T\  (q\supseteq p\land q\fo a\colon\vp). \]
This is enough to prove Goodman's theorem. That our version of the soundness theorem  holds follows from the proof.

6. Goodman \cite[Theorem 6]{goodman} states that if
$\vp$ is a first-order sentence of $\haw$, then there exists a definable nonempty set $T$ of finite  partial functions and an index $a$  such that
\[ \ha\vdash \vp\imp \forall p\in T\ \exists q\in T\ (q\supseteq p\land q\fo a\colon \vp). \] 
However, this does not work in $\ha$. In fact, suppose $\vp$ is a sentence of the form $\psi_0\lor\psi_1$. Then we should be able to find an index  $a$ such that if $\vp$ is true then for all $p\in T$ there is $q\supseteq p$ such that $q\fo a\colon  \vp$.  Following the proof, we have an index $a_i$ for $\psi_i$ and $a$ should be either $\langle 0,a_0\rangle$ or $\langle 1,a_1\rangle$.  Of course we can choose neither beforehand. 

7. If we define Goodman realizability by replacing the clauses for $\imp$ and $\forall$ with the following:
\begin{itemize}
	\item $p\fo a\colon \vp\imp \psi$ iff for every $q\supseteq p$ and for every 
	number $b$ such that $q\fo b\colon \vp$, $a^qb$ is defined and $ q\fo {a^qb}\colon \psi$ 
	\item $p\fo a\colon \forall x^A\vp$ iff for every $q\supseteq p$ and for every number $n$ such that $q\fo n\in A$,  $a^qn$ is defined and $q\fo a^qn\colon \vp(F_n^A)$ 	
\end{itemize}
then we can still prove the soundness theorem but  we are no longer able to prove  the self-realizability theorem. This is where the  $\forall p\exists q\supseteq p$ definition is at work.

8. It clearly follows from Goodman's theorem that for every arithmetical sentence $\vp$ of $\haw$, if 
$\haw+\ac+\dc\vdash \vp$, then $\pa\vdash \vp$. Indeed, the proof of this weaker conservation result can be obtained by using  total functions instead of finite partial functions. Let  $\aca$ be the subsystem of second order-arithmetic with comprehension and induction restricted to  arithmetical formulas. It is well-known that $\aca$ is conservative over $\pa$.  One can show that if $T$ is a  set of total functions $p$ from $\N$ to $\N$ definable in the language of second-order arithmetic, and $\vp$ is  a provable sentence of $\ha^\omega+\ac+\dc$, then there is an index $a$ such that 
\[ \aca\vdash \forall p\in T\ p\fo a\colon \vp\]
Fix a first-order sentence  $\vp$ and let $T$ be the set of total functions from $\N$ to $\N$ which are Skolem functions for every subformula of $\vp$. Then $T$ is definable and  one can show that
\[   \aca\vdash \exists p\  p\in T\land \forall p\in T\forall b\ ((p\fo b\colon \vp) \imp \vp). \]

In fact, we can define a $p$ in $T$  by arithmetical comprehension.  By putting the things together, we have that if $\ha^\omega+\ac+\dc\vdash \vp$, then $\aca\vdash \vp$, and so $\pa\vdash \vp$, for any arithmetical sentence $\vp$.

Of course one needs to reprove the soundness and the self-realizability theorems and so this is not a real simplification. However, note that we can work with total functions. Goodman's proof is designed for $\ha$ and to obtain the conservation result one must use partial functions so that $\ha\vdash \exists p\ p\in T$. For instance we always have $\emptyset\in T$. 
With total functions the definition of Goodman realizability is  much simpler.  Basically, 
$p\fo a\colon \vp$ iff $a$ Kleene realizes $\vp$ relative to $p$. For instance,
\begin{itemize}
	\item $p\fo a\colon \vp\imp\psi$ iff for every $b$, if $p\fo b\colon \vp$ then $a^pb$ is defined and $p\fo a^pb\colon \psi$. 
\end{itemize}
In particular, the definition of $p \fo  a\colon \vp$ is local, that is, it does not depend on other $q\in T$. 

\subsection{Discussion}
One might wonder how Goodman's theorem  applies to concrete proofs in, say, $\haw+\ac$ of a first-order sentence $\vp$. How does the corresponding proof in $\ha$ look like? For instance, it is not difficult to see that there is a proof in $\haw+\ac$ of the collection principle
\[    \forall x<a \exists y \vp(x,y) \imp \exists b\forall x<a\exists y<b \vp(x,y), \]
where  $\vp(x,y)$ is any first-order formula, that uses only quantifier-free induction and countable choice (\footnote{From $\forall x < a \exists y \vp(x,y)$ obtain $\exists f \forall x < a \vp(x, fx)$ by countable choice. Then take $b = 1 + \max \{ fx | x < a \}$ so that we have $\forall x < a \exists y < b \vp(x,y)$.}).  In the proof of the soundness theorem no induction is used to realize $\ac$. However, the induction needed to realize quantifier-free induction depends on the complexity of the forcing conditions, which in this case depends on the complexity of $\vp(x,y)$. In other words, the amount of induction needed in the proof of collection in $\ha$ depends on the complexity of $\vp(x,y)$. 

It would be interesting to find a more direct proof of Goodman's conservation result that can be easily implemented so as to give a direct transformation of proofs in $\haw+\ac$ into proofs in $\ha$. Goodman's proof is not quite suitable for this purpose.  

\section{Goodman's theorem for choice and extensionality}\label{extensional}

We aim to define a tailored version of Goodman realizability for both choice and extensionality such that first-order formulas are self-realizing. As observed in Section \ref{goodman}, 
Goodman realizability  is essentially the $\sf HRO$ interpretation of $\haw$ into $\ha$ followed by Kleene recursive realizability. Note that the $\sf HRO$ interpretation of extensionality is not Kleene realizable (otherwise extensionality would hold in $\sf HRO$). Therefore Goodman realizability does not validate extensionality. Before presenting our version of Goodman realizability,  let us discuss some attempts to solve the problem. Let us focus on the soundness. In fact, adding the forcing relation, although crucial for the self-realizability theorem,  is a routine matter. \medskip

1. The most obvious attempt is to replace $\sf HRO$ with $\sf HEO$. This would validate extensionality.  The problem is that 
the $\sf HEO$ interpretation of $\ac$  is not Kleene realizable. For instance, (the interpretation of) 
\[  \forall x^{N\to N}\exists e^N\forall y^N( xy=\kl e(y)) \]
is  Kleene realizable (any index for the identity function will do) but
(the interpretation of) \[  \exists z^{(N\to N)\to N}\forall x^{N\to N}\forall y^N(xy=\kl {zx}(y)) \]
is not. Therefore, the following instance of the axiom of choice
\[   \forall x^{N\to N}\exists e^N\forall y^N( xy=\kl e(y))\imp \exists z^{(N\to N)\to N}\forall x^{N\to N}\forall y^N(xy=\kl {zx}(y)) \]
is not Kleene realizable.

2.  A second attempt is to consider  Kreisel's modified realizability followed by the $\sf HEO$ interpretation of $\ehaw$ into $\ha$.  That this cannot work follows from the fact that in Kreisel's modified realizability certain instances of independence of premise which are not provable in $\ha$ are nevertheless realizable. This is definitely a problem. 

3.  To avoid the problem in 2.\ one can first interpret $\ehaw$ into $\ha$ by using $\sf HEO$ and then reinterpret $\ha$ into itself by using a Kreisel's version of Kleene recursive realizability.  Note in fact that independence of premise is not Kleene realizable.  A direct realizability interpretation of $\ehaw+\ac+\dc$ into $\ha$ based on this idea can be found in Section \ref{realizability}. In this interpretation formulas $\vp$ come with a certain type $A$ and realizers of $\vp$ are objects of $\sf HEO$ of type $A$. By adding the forcing relation we obtain a realizability interpretation which is sound but fails to satisfy the self-realizability theorem. This is the problem described in the introduction.   \medskip

So much for the attempts. Our solution is  to  combine $\sf HEO$ with an extensional version of Kleene realizability. In a sense, the interpretation in 3.\ is also extensional since realizers are elements of $\sf HEO$. This is not the case here. We use $\sf HEO$ to interpret the quantifiers $\exists x^A$ and $\forall x^A$, but we do not see realizers as elements of $\sf HEO$. Actually, every formula is a type, the type of its realizers, and we have an extensional equality between realizers of the same formula.    The link is the following: if $p\fo a\colon \forall x^A\exists y^B\vp$ then $p\fo b\in A\to B$, where $b^pn\simeq (a^pn)_0$. This is what we need to realize $\ac$. On the other hand, extensionality is trivially realizable since it follows from its $\sf HEO$ interpretation.  Details follow.\\

We will repeatedly use the fact that  $p\subseteq q$ implies $ a^p \subseteq  a^q$, that is, for all $n$, if $a^pn$ is defined, then $a^qn$ is also defined and $a^pn=a^qn$.
Let $T$ be any set of partial functions from $\N$ to $\N$.  From now on, we tacitly quantify $p,q,r$ over $T$. As in Goodman, our discussion is informal. The reader will convince himself that all the definitions and proofs can be carried out in $\ha$. \\

Convention. For the sake of brevity  we write, e.g.,  $p\fo abc \in A$ for $a^pbc$ is defined and  $p\fo a^pbc\in A$. Similarly, 
we write, e.g., $p\fo (an,b)\colon \vp(F_{cm})$ for $a^pn$  and $c^pm$ are  defined and $p\fo (a^pn,b)\colon \vp(F_{c^pm})$. \\

(1) For every $p\in T$, for all numbers $a,b$ and for every type $A$, we define $p\fo a=_Ab$ and let $p\fo a\in A$ be $p\fo a=_Aa$. 

\begin{itemize}
	\item $p\fo a=_Nb$ iff $a=b$
	\item $p\fo a=_{A\times B} b$ iff $p\fo (a)_0=_A(b)_0$ and $p\fo (a)_1=_B(b)_1$
	\item $p\fo a=_{A\to B} b$ iff for all $q\supseteq p$ and for all $n,m$, if $q\fo n=_Am$ then there is $r\supseteq q$ such that $r\fo  an=_Bbm$.
\end{itemize}

This is a (relativized) version of $\sf HEO$. Properties. (E1) Monotonicity. If $p\fo a=_Ab$ and $q\supseteq p$ then $q\fo a=_Ab$. (E2) Reflexivity. $p\fo a=_Ab$ implies $p\fo a,b\in A$.  Symmetry. If $p\fo a=_Ab$ then $p\fo b=_Aa$. Transitivity. If $p\fo a=_Ab$ and $p\fo b=_Ac$ then $p\fo a=_Ac$. By induction on $A$ by using monotonicity.  (E3) $p\fo a=_{A\to B}b$ iff $p\fo a,b\in A\to B$ and for all $q\supseteq p$ and for all $n$, if $q\fo n\in A$ then there is $r\supseteq q$ such that $r\fo an=_Bbn$. (E4) For every type $A$ there is a number $0^A$ such that $p\fo 0^A\in A$ for all $p$. Let $0^N=0$, $0^{A\times B}=\langle 0^A,0^B\rangle$ and $0^{A\to B}$ be an index for the function $\lambda n. 0^B$. \\

(2) For any $p\in T$, let $\mcal L_p$ be the language $\mcal L$ of $\haw$ augmented with constants $F_a^A$ for any $a$ and $A$ such that $p\fo a\in A$.  We simply write $F_a$ when the type is clear from the context.  In particular, a term of $\mcal L$ is a term of $\mcal L_p$ for all $p$. Note that if $\alpha$ is a term of $\mcal L_p$ (we also say that $\alpha$ is a $p$-term) and $q\supseteq p$, then $\alpha$ is a $q$-term.    The value $|\alpha|_p$ of a closed term $\alpha$ of $\mcal L_p$ is defined as in Goodman by recursion on $A$. Note that $|\alpha|_p$ can be undefined. 
\begin{itemize}
	\item $|0|_p=0$
	\item $|S|_p=|S|$ is such that$|S|^pn=n+1$ for all $p\in T$
	\item $|g(\alpha_1,\ldots,\alpha_k)|_p\simeq g(|\alpha_1|_p,\ldots,|\alpha_k|_p)$, for $g$ primitive recursive and $\alpha_i$ of type $N$
	\item $|\varPi|_p=|\varPi|$ is such that  $|\varPi|^pab=a$ for all $p\in T$ 
	\item $|\varSigma|_p=|\varSigma|$ is such that $|\varSigma|^pabn\simeq a^pn(b^pn)$ for all $p\in T$
	\item $|R|_p=|R|$ is such that $|R|^pab0=a$ and  $|R|^pab(n+1)\simeq  b^p(|R|^pabn)n$  for all $p\in T$ 
	\item $|D|_p=|D|$ is such that $|D|^pab=\pair ab$ for all $p\in T$
	\item $|D_i|_p=|D_i|$ is such that $|D_i|^pa=(a)_i$  for all $p\in T$
	\item $|\alpha\beta|_p\simeq \kl{|\alpha|_p}^p(|\beta|_p)$
	\item $|F_a^A|_p= a$
\end{itemize}

$|R|$ exists by the recursion theorem. 
Properties. (V1) Monotonicity. If $|\alpha|_p$ is defined and $q\supseteq p$ then $|\alpha|_q$ is defined and $|\alpha|_p=|\alpha|_q$. (V2) For any term $\alpha$ with variables among the distinct variables $x_1,\ldots, x_k$ there exists an index $d$ such that $d^pn_1\ldots n_k\simeq |\alpha(F_{n_1},\ldots, F_{n_k})|_p$ for all $n_1,\ldots,n_k,p$. \\

Convention. For the sake of brevity, we write, e.g., $p\fo |\alpha|=_A|\beta|$ for $|\alpha|_p$ and $|\beta|_p$ are both defined and $p\fo  |\alpha|_p=_A|\beta|_p$. Similarly, we write $|\alpha|_p=|\beta|_p$ if $|\alpha|_p$ and $|\beta|_p$ are both defined and $|\alpha|_p=|\beta|_p$.\\

We now extend the definition of equality to terms.  Let $\alpha$ and $\beta$ be closed $p$-terms of type $A$. Then 
\begin{itemize}
	\item $p\fo \alpha=_A\beta$ iff for all $q\supseteq p$ there is $r\supseteq q$ such that $r\fo |\alpha|=_A|\beta|$.
\end{itemize}

From (V1) it follows that if $p\fo |\alpha|=_A|\beta|$ then $p\fo \alpha=_A\beta$.  \smallskip

Properties.  (T1) Monotonicity. If $p\fo \alpha=_A\beta$ and $q\supseteq p$ then $q\fo \alpha=_A\beta$. 
(T2) Reflexivity. If $p\fo \alpha=_A\beta$ then $p\fo \alpha,\beta\in A$.
Symmetry. If $p\fo \alpha=_A\beta$ then $p\fo \beta=_A\alpha$. Transitivity.  If $p\fo \alpha=_A\beta$ and $p\fo \beta=_A\gamma$ then $p\fo \alpha=_A\gamma$. 
(T3) If $\alpha$ is a closed $p$-term of type $A$ then $p\fo \alpha\in A$. 
(T4) Let $\gamma(x^A)$ be a $p$-term of type $B$, and $\alpha$ and $\beta$ be closed $p$-terms  of type $A$. If $p\fo \alpha=_A\beta$,  then $p\fo \gamma(\alpha)=_B\gamma(\beta)$. In particular, it follows from (T3) that if $|\alpha|_p=|\beta|_p$ then $p\fo \gamma(\alpha)=_B\gamma(\beta)$.  
(T5)  Let $\gamma(x^A),\delta(x^A)$ be $p$-terms of type $B$, and $\alpha,\beta$ be closed $p$-terms of type $A$. If $p\fo \gamma(\alpha)=_B\delta(\alpha)$ and $p\fo \alpha=_A\beta$, then  $p\fo \gamma(\beta)=_B\delta(\beta)$.   \\

\begin{proof}[Proof of (T3)]
	If $\alpha$ is a constant of $\mcal L$ of type $A$, then $|\alpha|_p=|\alpha|$ for all $p$. One can show that $p\fo |\alpha|\in A$ for all $p$.  If $\alpha$ is $F_a^A$, then $p\fo a\in A$ by definition, and so $p\fo F_a\in A$.  Consider an application term $\alpha\beta$. Say $\alpha$ is of type $A\to B$ and $\beta$ is of type $A$. We claim that $p\fo \alpha\beta\in B$. Let $q\supseteq p$. By induction, we can find $r\supseteq q$ such that $r\fo |\alpha|\in A\to B$ and $r\fo |\beta|\in A$. Here we use monotonicity. By definition, there is $s\supseteq r$ such that $s\fo |\alpha|_r^s|\beta|_r\in B$. Since $|\alpha\beta|_s\simeq |\alpha|_r^s|\beta|_r$, the claim follows. 
\end{proof}
\begin{proof}[Proof of (T4)]
	If $\gamma$ is $x^A$ then it is trivial. If $\gamma$ is a constant see (T3).  
	Let $\gamma$ be an application term, say $(\gamma_1\gamma_2)(x^A)$ of type $C$ where $\gamma_i$ is of type $A_1=B\to C$ and $\gamma_2$ is of type $A_2=B$.  By induction, $p\fo \gamma_i(\alpha)=_{A_i}\gamma_i(\beta)$. Let us show that $p\fo \gamma_1(\alpha)\gamma_2(\alpha)=_C\gamma_1(\beta)\gamma_2(\beta)$. Let $q\supseteq p$. By induction we can find $r\supseteq q$ such that $r\fo |\gamma_i(\alpha)|=_{A_i}|\gamma_i(\beta)|$. Note that here we use monotonicity. By definition, we can find $s\supseteq r$ such that $s\fo |\gamma_1(\alpha)|_r^s|\gamma_2(\alpha)|_r=_B|\gamma_1(\beta)|_r^s|\gamma_2(\beta)|_r$. Since $|\gamma(\alpha)|_s\simeq |\gamma_1(\alpha)|_r^s|\gamma_2(\alpha)|_r$ and similarly  $|\gamma(\beta)|_s\simeq |\gamma_1(\beta)|_r^s|\gamma_2(\beta)|_r$, the claim follows. 
\end{proof}
\begin{proof}[Proof of (T5)]
	Suppose that $p\fo \gamma(\alpha)=_B\delta(\alpha)$ and $p\fo \alpha=_A\beta$. By (T4), it follows that $p\fo \gamma(\alpha)=_B\gamma(\beta)$ and $p\fo \delta(\alpha)=_B\delta(\beta)$. By symmetry and transitivity, we get $p\fo \gamma(\beta)=_B\delta(\beta)$.  
\end{proof}

\tb{Remark}. One can define, along the lines of Goodman, $p\fo \alpha=_A\beta$ iff $|\alpha|_p$ and $|\beta|_p$ are both defined and $p\fo |\alpha|_p=_A|\beta|_p$. Both definitions are fine, but we get slightly different properties.  For instance, by our definition, if  $p\fo \alpha=_A\beta$ then $p\fo \gamma(\alpha)=_B\gamma(\beta)$. This is property (T4). By this alternative definition, we only get that for some $q\supseteq p$, $q\fo \gamma(\alpha)=_B\gamma(\beta)$. This is however a minor issue.  \\

(3) For every $p\in T$, for all numbers $a,b$ and for every sentence $\varphi$ of $\mathcal L_p$ we define  $p\fo (a,b)\colon \vp$. The  meaning is that $p$ forces  $a$ and $b$ to be  equal realizers of $\vp$. Let $p\fo a\colon \vp$ be  a shorthand for $p\fo (a,a)\colon \vp$.  

\begin{itemize}
	\item  $p\fo (a,b)\colon \alpha=_A\beta$ iff $p\fo \alpha=_A\beta$. 
	\item $p\fo (a,b)\colon \vp\land \psi$ iff $p\fo ((a)_0,(b)_0)\colon \vp$ and $p\fo ((a)_1,(b)_1)\colon \psi$
	\item $p\fo (a,b)\colon \vp\lor\psi$ iff either  $(a)_0=(b)_0=0$ and $p\fo ((a)_1,(b)_1)\colon \vp$ or else $(a)_0=(b)_0=1$ and $p\fo ((a)_1,(b)_1) \colon\psi$
	\item $p\fo (a,b)\colon \vp\imp\psi$ iff for all $q\supseteq p$, if $q\fo (c,d)\colon \vp$ then there is $r\supseteq q$ such that $r\fo (ac,bd) \colon \psi$. 
	\item $p\fo (a,b)\colon \exists x^A\vp$ iff $p\fo (a)_0=_A(b)_0$ and $p\fo ((a)_1,(b)_1)\colon \vp(F_{(a)_0}^A)$  
	\item $p\fo (a,b)\colon \forall x^A\vp$ iff for all $q\supseteq p$, if $q\fo n=_Am$ then
	there is $r\supseteq q$ such that  $r\fo (an,bm) \colon \vp(F_n^A)$
\end{itemize}
Compare this definition with Goodman realizability. The atomic case is based on a different interpretation of equality: $\sf HRO$ in Goodman and $\sf HEO$ here. Indeed, one can recover Goodman realizability by reading $p\fo \alpha=_A\beta$ as  $|\alpha|_p=|\beta|_p$ and  $p\fo a=_Ab$ as $a=b$ and $p\fo a\in A$.  On the other hand, one cannot recover our interpretation from Goodman realizability by simply replacing the atomic case. The treatment of equality between realizers is indeed the main feature of our interpretation.\smallskip

Properties.  (R1) Monotonicity. If $p\fo (a,b)\colon\vp$ and $q\supseteq p$ then $q\fo (a,b)\colon \vp$. (R2) If $p\fo (a,b)\colon \vp(\alpha)$ and $p\fo \alpha=_A\beta$, then  $p\fo (a,b)\colon \vp(\beta)$. By monotonicity. Use (T5) in the atomic case. In particular, if $p\fo (a,b)\colon \vp(F_n)$ and $p\fo n=_Am$ then $p\fo (a,b)\colon \vp(F_m)$.   (R3) Reflexivity. If  $p\fo (a,b)\colon \vp$ then  $p\fo a,b\colon \vp$.  Symmetry. If $p\fo (a,b)\colon\vp$ then $p\fo (b,a)\colon \vp$.  Transitivity. If $p\fo (a,b)\colon \vp$ and $p\fo (b,c)\colon \vp$ then $p\fo (a,c)\colon \vp$. By induction on $\vp$.  Use (R2) in $\exists$ and $\forall$.\\

By $a\colon \vp$ we mean that  $p\fo a\colon\vp$ for all $p$. In the proof of the soundness we  use the  following property to verify that $a\colon \vp$. \smallskip

(S) Let $\vp$ be $\forall x_1\ldots \forall x_k(\vp_1\imp\ldots \imp \vp_{l+1})$. Say $x_i$ has type $A_i$. 
\begin{itemize}
	\item Suppose $(a,b)\colon \vp$. Let $p\fo n_i=_{A_i}m_i$ for $i=1,\ldots,k$ and $p\fo (c_i,d_i)\colon \vp_i(F_{n_1},\ldots , F_{n_k})$ for $i=1,\ldots,l$.  Then there is $q\supseteq p$ such that 
	\[   q\fo (an_1\ldots n_kc_1\ldots c_l,bm_1\ldots m_kd_1\ldots d_l)\colon \vp_{l+1}(F_{n_1},\ldots F_{n_k}). \]
	\item Suppose $a^pn_1\ldots n_kb_1\ldots b_{l-1}$ is defined for all $n_i,b_i,p$. Moreover, suppose that  if $p\fo n_i=_{A_i}m_i$ for $i=1,\ldots,k$ and $p\fo (b_i,c_i)\colon \vp_i(F_{n_1},\ldots , F_{n_k})$ for $i=1,\ldots,l$,  then there is $q\supseteq p$ such that 
	\[  q\fo (an_1\ldots n_kb_1\ldots b_l,am_1\ldots m_kc_1\ldots c_l)\colon \vp_{l+1}(F_{n_1},\ldots F_{n_k}). \]
	Then $a\colon \vp$.  
\end{itemize} 

Note that  the following instances of (S) hold true (\footnote{Actually, (S1) does not follow from (S) but the proof is similar.}). \smallskip

(S1) Let $\vp$ be $\forall x_1\ldots \forall x_k\vp$. Say $x_i$ has type $A_i$. Then $(a,b)\colon \vp$ iff for all $n_i,m_i$ and for all $p$ such that $p\fo n_i=_{A_i}m_i$ for $i=1,\ldots, k$, there is $q\supseteq p$ such that $q\fo (an_1\ldots n_k,bm_1\ldots m_k)\colon \vp(F_{n_1},\ldots,F_{n_k})$.\smallskip

(S2) Let $\vp$ be $\vp_1\imp\ldots\imp \vp_l\imp \vp_{l+1}$. 
\begin{itemize}
	\item Suppose $(a,b)\colon \vp$ and $p\fo (c_i,d_i)\colon \vp_i$ for all $i=1,\ldots,l$. Then there is $q\supseteq p$ such that $q\fo (ac_1\ldots c_l,ad_1\ldots d_l)\colon \vp_{l+1}$. 
	\item Suppose that $a^pb_1\ldots b_{l-1}$ is defined for all $b_i,p$. Moreover, suppose that whenever $p\fo (b_i,c_i)\colon \vp_i$ for all $i=1,\ldots,l$,   there is $q\supseteq p$ such that $q\fo (ab_1\ldots b_l,ac_1\ldots c_l)\colon \vp_{l+1}$. Then $a\colon \vp$.
\end{itemize}

\begin{theorem}[Soundness]\label{sound}
	Suppose $\vp$ is a formula of $\mcal L$ such that \[\ehaw+\ac+\dc\vdash \varphi.\] If the free variables of $\vp$ are among the distinct variables $x_1,\ldots, x_k$, then there exists an index  $a$ such that for every set $T$ of partial functions, $p\fo a\colon \forall x_1\ldots\forall x_k\vp$ for every $p\in T$. In particular, if $\vp$ is a sentence, then there exists an index $a$ such that for every set $T$ of partial functions, $p\fo a\colon \vp$ for every $p\in T$.	
\end{theorem}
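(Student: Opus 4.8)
The plan is to argue by induction on the derivation of $\vp$ in $\ehaw+\ac+\dc$, producing for each axiom an index that realizes it uniformly in $p$ (an index $a$ with $a\colon\psi$ in the notation of the excerpt, so that the index depends only on the proof and not on $T$), and for each rule an effective operation on the indices realizing the premises that yields an index realizing the conclusion. Since every axiom and every formula built from the rules has the shape $\forall x_1\ldots\forall x_k(\vp_1\imp\ldots\imp\vp_{l+1})$, the verification is organized around property (S) and its specializations (S1), (S2): to check that a candidate index $a$ satisfies $a\colon\vp$ it suffices to assume realizers $(c_i,d_i)$ of the premises $\vp_i$ relative to equal arguments $n_i=_{A_i}m_i$ and to exhibit $q\supseteq p$ forcing the two applications $an_1\ldots n_kc_1\ldots c_l$ and $am_1\ldots m_kd_1\ldots d_l$ to be equal realizers of the conclusion. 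The invariant to maintain throughout is not merely that $a$ realizes $\vp$ but that it sends \emph{equal} realizers of the premises to \emph{equal} realizers of the conclusion; this is exactly what the pair notation $(a,b)\colon\vp$ records, and it is what keeps the interpretation extensional.

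For the propositional and predicate-logic rules (1)--(12), I would supply the usual realizability combinators, built via the s-m-n and recursion theorems: the identity index for (1), composition for modus ponens (2) and syllogism (3), the projections $(\cdot)_0,(\cdot)_1$ for (4), the tagged injections for (5), pairing for (6), case analysis for (7), and currying/uncurrying for (8); the ex falso axiom (9) is realized vacuously, since $p\fo (c,d)\colon 0=S0$ is impossible by the clause for $=_N$. The quantifier rules (10)--(12) are handled directly from the $\forall$ and $\exists$ clauses, using monotonicity (R1) to pass to the required $q\supseteq p$ and (R2) to substitute equal terms for variables. The constant axioms (15)--(18) are realized by the values $|\varPi|,|\varSigma|,|R|,|D|,|D_i|$ fixed in part (2), whose defining Kleene equations match the combinator, recursor, and pairing axioms; the defining equations (14) are realized likewise. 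For the numeric axioms (13) and the decidability axiom (21) one uses that $p\fo a=_Nb$ holds iff $a=b$. Induction (19) is realized by a metainductive argument in $\ha$: using the recursion theorem I would define an index that, given a realizer of the base case and a realizer of the step, iterates the step $n$ times, the induction on $n$ being the only induction invoked in the metatheory.

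The substantive cases are equality, extensionality, and the two choice principles. Reflexivity (20) follows from (T3) and (E2), and Leibniz (22) is realized using (R2), which is precisely the transport of a realizer of $\vp(\alpha)$ to a realizer of $\vp(\beta)$ once $p\fo\alpha=_A\beta$. Extensionality (23) is immediate: the clause for $=_{A\to B}$, reformulated as (E3), already says that two objects of arrow type are equal exactly when they agree extensionally on equal inputs, so a realizer of the hypothesis $\forall z(xz=yz)$ unwinds directly into a realizer of $x=_{A\to B}y$. The axiom of choice (24) is where the design of the interpretation is exploited: given a realizer $a$ of $\forall x^A\exists y^B\vp$, I would set $b^pn\simeq (a^pn)_0$ and check, via the $\forall$/$\exists$ clauses and the equal-realizer discipline of (S), that $p\fo b\in A\to B$ and that $b$ together with the second components realizes $\exists z\forall x\,\vp(x,zx)$.

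The main obstacle I anticipate is exactly this verification for (24), and the heavier analogue for relativized dependent choice (25). The difficulty is not finding the index---these are Goodman's indices from \cite[Theorems 1 and 4]{goodman}---but confirming that the extracted choice function is \emph{extensional}, that is, that $p\fo b=_{A\to B}b$ and, more generally, that equal arguments $n=_Am$ are sent to equal realizers of the existential, so that $b$ genuinely inhabits the $\sf HEO$-style arrow type rather than merely the $\sf HRO$-style one. This is exactly the point at which the earlier attempts (plain $\sf HRO$, or modified realizability over $\sf HEO$) broke down, and it is handled here by the two-index relation: the inductive hypothesis delivers not a bare realizer but one respecting equality of realizers, and (R2), (R3), and (S) propagate this through the application $b^pn\simeq(a^pn)_0$. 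For (25) I would, in addition, invoke the recursion theorem to build the sequence $z\colon N\to A$ stage by stage from a realizer of the step hypothesis, carrying the equal-realizer invariant along the recursion; this bookkeeping is routine once (S) is in place but is the most laborious part of the argument.
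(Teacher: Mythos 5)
Your proposal is correct and follows essentially the same route as the paper's proof: induction on the derivation using Goodman's indices, organized around property (S), with the recursion theorem for the induction axiom and $\dc$, extensionality discharged via (E3), and the axiom of choice handled exactly by the observation that the pair-realizer discipline makes the extracted function $b^pn\simeq (a^pn)_0$ extensional, i.e.\ $p\fo b\in A\to B$. The only cosmetic difference is in (14)--(18), where the paper takes the trivial index $a^pn=0$ (any index realizes an atomic formula) and puts the content into verifying that the values $|\varPi|,|\varSigma|,|R|,|D|,|D_i|$ force the equations, which is what you describe in substance.
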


\begin{proof}
	By induction on the proof of $\vp$. The indices are the same as in Goodman \cite[Theorem 1]{goodman}.
	
	Say $x_i$ is of type $A_i$. For the sake of brevity, we write $A$ for $A_1,\ldots, A_k$ and  $x^A$ for $x_1,\ldots,x_k$. For the remainder of the proof we assume that the free variables of $\vp$ are among $x^A$. We thus write $a\colon \forall x^A\vp$ or simply $a\colon \vp$ for $p\fo a\colon \forall x_1\ldots\forall x_k\vp$ for every $p\in T$.  To ease notation, we write, e.g., $n$ for $n_1\ldots n_k$ so that $a^pn$ stands for $a^pn_1\ldots n_k$ and $p\fo n=_Am$ stands for $p\fo n_1=_{A_1}m_1, \ldots, p\fo n_k=_{A_k}m_k$. Similarly, we write $\vp(F_n)$ for $\vp(F_{n_1},\ldots,F_{n_k})$. \\
	
	We repeatedly use (S) and the fact that Kleene application  and the forcing relation are monotone. \\	
	
	(1) $a\colon \vp\imp\vp$, where $a^pnb=b$.
	
	By (S), it is enough to show that if $p\fo n=_Am$ and $p\fo (b,c)\colon \vp(F_n)$, then there is $q\supseteq p$ such that $q\fo (anb,amc)\colon \vp(F_n)$. Easy. In fact, $p\fo (anb,amc)\colon\vp(F_n)$.  \\
	
	(2) Modus ponens: if $c\colon \vp$ and $b\colon \vp\imp\psi$ then $a\colon \psi$.
	
	As in Goodman \cite{goodman} we have to deal with variables, say $x^B$, that might occur free in $\vp$ but are not among $x^A$. Then the free variables of $\vp$ and $\vp\imp\psi$ are among $x^A,x^B$. By induction, let  $c\colon \forall x^A\forall x^B\vp$ and $b\colon\forall x^A\forall x^B(\vp\imp\psi)$. Choose $a$  such that $a^pn\simeq b^pn0^B(c^pn0^B)$, where $p\fo 0^B\in B$ for all $p$. We claim that $a\colon\forall x^A\psi$. 
	
	Suppose $p\fo n=_Am$. By (S1), it suffices to show that there is $q\supseteq p$ such that $q\fo (an,am)\colon \psi(F_n)$. Since $p\fo 0^B\in B$, we can find $q\supseteq p$ such that $q\fo (cn0^B,cm0^B)\colon \vp(F_n,F_{0^B})$. Therefore there is $r\supseteq q$ such that $r\fo (bn0^B(c^rn0^B),bm0^B(c^rm0^B))\colon \psi(F_n)$. Then $r$ is as required. \\
	
	(3) Syllogism: if $c\colon \vp\imp\psi$ and $b\colon \psi\imp\chi$ then $a\colon \vp\imp\chi$.
	
	As in (2) there might be variables, say $x^B$, that are free in $\psi$ but do  not occur in $x^A$. Let $c\colon \forall x^A\forall x^B(\vp\imp\psi)$ and $b\colon \forall x^A\forall x^B(\psi\imp\chi)$. Let $0^B$ be such that $p\fo 0^B\in B$ for all $p$. Choose $a$ such that $a^pnd\simeq b^pn0^B(c^pn0^Bd)$. We claim that $a\colon \forall x^A(\vp\imp\psi)$.
	
	Note that $a^pn$ is defined for all $n,p$. By (S), it is enough to show that if $p\fo n=_Am$ and $p\fo (d,e)\colon \vp(F_n)$,  then there is $q\supseteq p$ such that $q\fo (and,ame)\colon \chi(F_n)$. By induction, since $p\fo 0^B\in B$, that is, $p\fo 0^B=_B0^B$, we can find $q\supseteq p$ such that $q\fo (cn0^Bd,cm0^Be)\colon \psi(F_n)$. Therefore we can find $r\supseteq q$ such that $r\fo (bn0^B(c^rn0^Bd), bm0^B(c^rm0^Be))\colon \chi(F_n)$. Then $r$ is as required. \\

	(4) $a\colon \psi_0\land\psi_1\imp\psi_i$, where $a^pnb=(b)_i$.
	
	By (S), it is enough to show that if $p\fo n=_Am$ and $p\fo (b,c)\colon (\psi_0\land\psi_1)(F_n)$, then there is $q\supseteq p$ such that $q\fo (anb,amc)\colon \psi_i(F_n)$. Easy. We have that $p\fo ((b)_i,(c)_i)\colon \psi_i(F_n)$, and so $p$ is as required. \\
	
	(5) $a\colon \psi_i\imp\psi_0\lor\psi_1$, where $a^pnb=\langle i,b\rangle$.
	
	Suppose $p\fo n=_Am$ and $p\fo (b,c)\colon \psi_i(F_n)$. Then $p\fo (\pair ib,\pair ic)\colon (\vp\lor\psi)(F_n)$, that is, $p\fo (anb,amc)\colon (\vp\lor\psi)(F_n)$. By (S), $a$ is as desired.\\ 
	
	(6) If $b\colon \chi\imp\vp$ and $c\colon \chi\imp\psi$ then $a\colon \chi\imp\vp\land\psi$, where $a^pnd\simeq \langle b^pnd,c^pnd\rangle$.
	
	First, note that the free variables of the premises are among $x^A$. Note also that $a^pn$ is defined for all $n,p$. By (S), it is enough to show that if  $p\fo n=_Am$ and $p\fo (d,e)\colon \chi(F_n)$,  then there is $q\supseteq p$ such that $q\fo (and,ame)\colon (\vp\land\psi)(F_n)$. Easy. By induction, we can find $q\supseteq p$ such that $q\fo (bnd,bme)\colon \vp(F_n)$ and $q\fo (cnd,cme)\colon \psi(F_n)$. Then $q\fo (\pair {bnd}{cnd},\pair{bme}{cme})\colon (\vp\land\psi)(F_n)$. So $q$ is as desired. \\
	
	(7) If $b\colon \vp\imp\chi$ and $c\colon\psi\imp\chi$ then $a\colon \vp\lor\psi\imp\chi$, where  \[ a^pnd\simeq \begin{cases}  b^pn(d)_1 &  \text{ if } (d)_0=0\\
	c^pn(d)_1 &   \text{ otherwise}
	\end{cases} \]
	
	Note that $a^pn$ is defined for all $n,p$. By (S), it is enough to show that if  $p\fo n=_Am$ and $p\fo (d,e)\colon (\vp\lor\psi)(F_n)$, then  there is $q\supseteq p$ such that $q\fo (and,ame)\colon \chi(F_n)$. Suppose $(d)_0=0=(e)_0$ and $p\fo ((d)_1,(e)_1)\colon \vp(F_n)$. Then we can find $q\supseteq p$ such that $q\fo (bn(d)_1,bm(e)_1)\colon \chi(F_n)$. But $a^qnd=b^qn(d)_1$ and $a^qme=b^qm(e)_1$. Then $q$ is as desired. The case $(d)_0=1=(e)_0$ is similar. \\
	
	(8) If $b\colon \vp\land\psi\imp\chi$ then  $a\colon \vp\imp(\psi\imp\chi)$, where $a^pnce\simeq b^pn\langle c,e\rangle$.
	If $b\colon \vp\imp(\psi\imp\chi)$ then $a\colon  \vp\land\psi\imp\chi$, where $a^pnc\simeq b^pn(c)_0(c)_1$.
	
	For the first rule, note that $a^pnc$ is defined for all $n,c,p$. By (S), it is enough to show that if  $p\fo n=_Am$, $p\fo (c,d)\colon \vp(F_n)$ and $p\fo (e,f)\colon \psi(F_n)$, then  there is $q\supseteq p$ such that $q\fo (ance,amdf)\colon \chi(F_n)$. We have $p\fo (\pair ce , \pair df)\colon (\vp\land\psi)(F_n)$. By induction,  there is $q\supseteq p$ such that $q\fo (bn\pair ce,bm\pair df)\colon \chi(F_n)$. Then $q$ is as required. 
	
	The second rule is similar.  \\
	
	(9) $a\colon 0=S0\imp\vp$, where  $a^pnb=0$. 
	
	Easy. Note that no $p$ forces $0=S0$.\\
	
	(10) If $b\colon\vp\imp\psi$ then $a\colon\vp\imp\forall x^B\psi$, for $x$ not free in $\vp$.
	
	Suppose that $x^B$ is not among $x^A$. Then the free variables of $\vp\imp\psi$ are among $x^A,x^B$.  Let $b\colon \forall x^A\forall x^B(\vp\imp\psi)$. Choose $a$ such that $a^pnce\simeq b^pnec$. We claim that $a\colon \forall x^A(\vp\imp\forall x^B\psi)$.
	
	Note that $a^pnc$ is defined for all $n,c,p$. By (S), it is enough to show that if $p\fo n=_Am$ and  $p\fo (c,d)\colon \vp(F_n)$ then there is $q\supseteq p$ such that $q\fo (anc,amd)\colon \forall x^B\psi(F_n)$. We claim that $p\fo (anc,amd)\colon \forall x^B\psi(F_n)$. Let $q\supseteq p$ such that $q\fo e=_Bf$. We aim to find $r\supseteq q$ such that $r\fo (ance,amdf)\colon \psi(F_n,F_e)$. By induction, we can find $r\supseteq q$ such that $r\fo (bnec,bmdf)\colon \psi(F_n,F_e)$. Then $r$ is as required.  
	
	If $x^B$ is among $x_1,\ldots,x_k$, say $x^B$ is $x_i$, then the free variables of $\vp\imp\psi$ are also among $x_1,\ldots, x_k$. Choose $a$ such that $a^pnce\simeq b^pn_1\ldots n_{i-1}en_{i+1}\ldots n_kc$. Then $a$ is as desired. The proof is similar.  \\
	
	(11) If $b\colon\vp\imp\psi$ then $a\colon\exists x^B\vp\imp\psi$, for $x$ not free in $\psi$.
	
	As in (10), suppose first that $x^B$ is not among $x_1,\ldots, x_k$. Then the free variables of $\vp\imp\psi$ are among $x_1,\ldots,x_k,x$.  Let $b\colon \forall x_1\ldots \forall x_k\forall x\ (\vp\imp\psi)$. Choose $a$ such that $a^pnc\simeq b^pn(c)_0(c)_1$. 
	
	By (S), it is enough to show that if $p\fo n=_Am$ and $p\fo (c,d)\colon \exists x^B\vp(F_{n},x^B)$, then   there is $q\supseteq p$ such that $q\fo (anc,amd)\colon \psi(F_n)$. As $p\fo (c)_0=_B(d)_0$, there is $q\supseteq p$ such that $q\fo (bn(c)_0,bm(d)_0)\colon \vp(F_n,F_{(c)_0})\imp\psi(F_n)$. As $q\fo ((c)_1,(d)_1)\colon \vp(F_n,F_{(c)_0})$, we can find $r\supseteq q$ such that $r\fo (bn(c)_0(c)_1,bm(d)_0(d)_1)\colon \psi(F_n)$, that is, $r\fo (anc,amd)\colon \psi(F_n)$, as desired.  
	
	If $x^B$ is among $x_1,\ldots,x_k$, say $x^B$ is $x_i$, then the free variables of $\vp\imp\psi$ are also among $x_1,\ldots, x_k$. Choose $a$ such that $a^pnc\simeq b^pn_1\ldots n_{i-1}(c)_0n_{i+1}\ldots n_k(c)_1$. Then $a$ is as desired. The proof is similar. \\

	(12) $a\colon \forall x^B\vp\imp\vp(\alpha)$ and $a\colon \vp(\alpha)\imp\exists x^B\vp$, for any term $\alpha$ free for $x$ in $\vp$, where  $a^pnb\simeq b^p(d^pn)$ and   $a^pnb\simeq \langle d^pn,b\rangle$ respectively, and $d^pn\simeq |\alpha(F_n)|_p$. The index $d$ exists by (V2). 
	
	For the first axiom, it is sufficient to show by (S) that if $p\fo n=_Am$ and $p\fo (b,c)\colon \forall x^B\vp(F_n,x^B)$, then there is $q\supseteq p$ such that $q\fo (anb,amc)\colon \vp(F_n,\alpha(F_n))$. By (T4), as $p\fo F_n=_AF_m$, we have that $p\fo \alpha(F_n)=_B\alpha(F_m)$. Let $q\supseteq p$ be such that $q\fo |\alpha(F_n)|=_B |\alpha(F_m)|$, that is, $q\fo dn=_Bdm$.  As $q\fo (b,c)\colon \forall x^B\vp(F_n,x^B)$, there is $r\supseteq q$ such that $r\fo (b(dn),c(dm))\colon \vp(F_n,F_{dn})$. As $|F_{d^rn}|_r=d^rn=|\alpha(F_n)|_r$, by (R2) we have that  $r\fo (b(dn),c(dm))\colon \vp(F_n,\alpha(F_n))$, and thus $r\fo (anb,amc)\colon \vp(F_n,\alpha(F_n))$. So $r$ is as required. 
	
	For the second axiom, it is sufficient to show by (S) that if $p\fo n=_Am$ and $p\fo (b,c)\colon \vp(F_n,\alpha(F_n))$, then there is $q\supseteq p$ such that $q\fo (anb,amc)\colon \exists x^B\vp(F_n,x)$.  By (T4), as $p\fo F_n=_AF_m$, we have that $p\fo \alpha(F_n)=_B\alpha(F_m)$. Let $q\supseteq p$ be such that $q\fo |\alpha(F_n)|=_B |\alpha(F_m)|$, that is, $q\fo dn=_Bdm$. As $q\fo \alpha(F_n)=_BF_{dn}$, by (R2) we have that  $q\fo (b,c)\colon \vp(F_n,F_{dn})$ and so $q\fo (anb,amc)\colon \exists x^B\vp(F_n,x)$. \\

	(13) $a\colon\neg(0=Sx)$ and $a\colon Sx=Sy\imp x=y$, for  $x,y$ variables of type $N$, where $a^pnb=0$.
	
	Easy.\\
	
	All the axioms from (14) to  (18) are given by atomic formulas of the form $\alpha(x^A)=_B\beta(x^A)$.\\
	
	(14) Defining equations of primitive recursive functions.
	
	(15) Combinators: $\varPi xy=x$ and $\varSigma xyz=(xz)(yz)$.
	
	(16) Recursors: $Rxy0=x$ and $Rxy(z+1)=y(Rxyz)z$ at all sensible types
	
	(17) $D_0(D xy)=x$ and $D_1(Dxy)=y$, for $x$ of type $A$ and $y$ of type $B$
	
	(18) $x=D(D_0x)(D_1x)$, for  $x$ of type $A\times B$\\
	
	Choose $a$ such that $a^pn=0$ for all $n,p$. It clearly suffices to prove that $(*)$ if $p\fo n\in A$ then $p\fo \alpha(F_n)=_B\beta(F_n)$.
	The verification is routine. Consider for instance the axioms for the recursors.
	
	(16) $Rxy0=_Bx$ and $Rxy(Sz)=_By(Rxyz)z$, for $x$ of type $B$, $y$ of type $B\to N\to B$ and $z$ of type $N$. 
	
	Note that $(*)$ is equivalent to saying that for every $p$, if $p\fo n\in A$, then there is $q\supseteq p$ such that $q\fo |\alpha(F_n)|=_B|\beta(F_n)|$. 
	
	We first claim that if $p\fo a\in B$ and $p\fo b\in B\to N\to B$, then for all $i$ there exists $q\supseteq p$ such that $q\fo |RF_aF_bF_i|\in B$. Let $i=0$. Then $|RF_aF_bF_0|_p\simeq |R|^pab0$. By definition, $|R|^pab0=a$, and so $p$ is as desired. Case $i+1$. Let $q\supseteq p$ such that $q\fo |RF_aF_bF_i|\in B$. By assumption, we can find $r\supseteq q$ such that $r\fo b(|RF_aF_bF_i|_q)i\in B$. On the other hand, $|RF_aF_bF_{i+1}|_q\simeq b^q(|R|^qabi)i$. It follows that $r$ is as desired. 
	
	For the first axiom, suppose that $x^A$ is $x,y,\ldots$ Let $p\fo n\in A$. In particular, $p\fo n_1\in B$. We claim that $p\fo RF_{n_1}F_{n_2}0=_BF_{n_1}$. By definition, $|R|^pn_1n_20=n_1$. On the other hand, $|RF_{n_1}F_{n_2}0|_p\simeq |R|^pn_1n_20$.  By (T3), it follows that  $p$ is as desired. 
	
	For the second axiom, suppose that $x^A$ is $x,y,z,\ldots$ Let $p\fo n=_Am$. In particular, $p\fo n_1\in A$ and $p\fo n_2\in B\to N\to B$. By the claim, there is $q\supseteq p$ such that $q\fo |RF_{n_1}F_{n_2}F_{n_3}|\in B$. By the assumption on $b$, we can find $r\supseteq q$ such that $r\fo n_2(|RF_{n_1}F_{n_2}F_{n_3}|_qn_3\in B$. It is not difficult to check by using the definition of $|R|$ that $|RF_{n_1}F_{n_2}(SF_{n_3})|_r=|F_{n_2}(RF_{n_1}F_{n_2}F_{n_3})F_{n_3}|_r$. It follows by (T3) that $r$ is as desired. \\

	(19) Induction: If $b\colon\vp(0)$ and $c\colon \vp(x)\imp\vp(Sx)$ then $a\colon \vp(x)$, for $x$ of type $N$, where  $a^pn0\simeq b^pn$ and $a^pn(i+1)\simeq  c^pni(a^pni)$. 
	
	We may assume that $x^A$ is $x^B,x^N$.  Let $b\colon \forall x^B\vp(0)$ and  $c\colon \forall x^B\forall x^N(\vp(x)\imp \vp(Sx))$. We claim that $a\colon \forall x^B\forall x^N\vp(x)$. 
	
	By (S1), it is enough to show that for all $n,m,i,p$, if  $p\fo n=_Bm$ then there is $q\supseteq p$ such that $q\fo (ani,ami)\colon \vp(F^B_n,F^N_i)$. By induction on $i$. Let $i=0$. Fix $n,m,p$ such that $p\fo n=_Bm$. Then there is $q\supseteq p$ such that $q\fo (bn,bm)\colon \vp(F_n,0)$. By (R2),  $q\fo (bn,bm)\colon \vp(F_n,F_0)$. Thus $q\fo (an0,am0)\colon \vp(F_n,F_0)$. Suppose this is true for $i$ and let $p\fo n=_Bm$. By induction, there is $q\supseteq p$ such that $q\fo (ani,ami)\colon \vp(F_n,F_i)$. By definition, we can find $r\supseteq q$ such that $r\fo (cni(ani),cmi(ami))\colon \vp(F_n,SF_i)$. Since $|SF_i|_r=i+1$, by (R2) we have that $r\fo (cni(ani),cmi(ami))\colon \vp(F_n,F_{i+1})$, and therefore $r\fo (an(i+1),am(i+1))\colon \vp(F_n,F_{i+1})$, as desired. \\

	(20) $a\colon x=x$ for $x$ of type $A$, where $a^pn=0$.
	
	Easy. \\
	
	(21) $a\colon x=y\lor \neg (x=y)$ for $x$ and $y$ of type $N$.
	We may assume  that $x^A$ is $x,y,\ldots$ Choose $a$ such that $a^pn_1n_2\ldots n_k= \langle 0,0\rangle$ if $n_1=n_2$, $\langle 1,0\rangle$ otherwise. 
	
	Easy. \\
	
	(22) Leibniz: $a\colon x^B= y^B\land \varphi(x)\imp \varphi(y)$, for $x$ and $y$ of type $B$, where $a^pnb= (b)_1$. 
	
	We may assume that $x^A$ is $x,y\ldots$  Let $p\fo n=_{A}m$  and $p\fo (b,c)\colon F_{n_1}=_BF_{n_2}\land \vp(F_{n})$. By (S), it suffices to show that there is $q\supseteq p$ such that $q\fo (anb,amc)\colon \vp(F_{n_2},F_{n_2},\ldots,F_{n_k})$, that is, $q\fo ((b)_1,(c)_1)\colon \vp(F_{n_2},F_{n_2},\ldots,F_{n_k})$. By definition, we have $p\fo ((b)_1,(c)_1)\colon \vp(F_{n})$. On the other hand, $p\fo ((f)_0,(g)_0)\colon F_{n_1}=_BF_{n_2}$, that is, $p\fo F_{n_1}=_BF_{n_2}$. By (R2)  the claim follows. \\

	(23) Extensionality:  $a\colon \forall z^B(xz=_Cyz)\imp x=_{B\to C}y$, where  $a^pnb=0$. 
	
	In this case $A$ is $B\to C$. We may assume that $x^A$ is $x,y,\ldots$  Suppose $p\fo n=_Am$ and $p\fo (b,c)\colon \forall z^B(F_{n_1}z=_C F_{n_2}z)$.  By (S), it suffices to show that $p\fo (anb,amc)\colon F_{n_1}=_{B\to C} F_{n_2}$, that is, $p\fo F_{n_1}=_{B\to C}F_{n_2}$. As $|F_{n_i}|_p=n_i$, it is enough to show that $p\fo n_1=_{B\to C}n_2$.  Note that $p\fo n_1,n_2\in B\to C$. By (E3), we just need to show that if $q\supseteq p$ and $q\fo d\in B$ then there is $r\supseteq q$ such that $r\fo n_1d=_Cn_2d$. Let $q\fo d\in B$. As $q\fo (b,c)\colon \forall z^B(F_{n_1}z=F_{n_2}z)$, we can find $r\supseteq q$ such that 
	$r\fo (bd,cd) \colon F_{n_1}F_d=_CF_{n_2}F_d$, that is, $r\fo  F_{n_1}F_d=_CF_{n_2}F_d$.  By definition, we can find $s\supseteq r$ such that $s\fo n_1d=_Cn_2d$.\\
	
	(24) Axiom of choice: $a\colon \forall x^B\exists y^{C}\varphi(x,y)\imp \exists z^{B\to C}\forall x^B\varphi(x,zx)$, where
	$a^pnb=\pair{{a_0}^pnb}{a_1^pnb}$, and $a_i^pnbd\simeq (b^pd)_i$.

	Let $p\fo n=_Am$ and $p\fo (b,c)\colon \forall x^B\exists y^C\vp(F_n,x,y)$. We claim that \[p\fo (anb,amc)\colon \exists z^{B\to C}\forall x^B\vp(F_n,x,zx),\]  that is,
	\begin{enumerate}
		\item[(i)] $p\fo a_0nb=_{B\to C}a_0mc$
		\item[(ii)]  $p\fo (a_1nb,a_1mc)\colon \forall x^B\vp(F_n,x,F_{a_0nb}x)$
	\end{enumerate}
	For (i), let $q\supseteq p$ and suppose that $q\fo d=_Be$. As $q\fo (b,c)\colon \forall x^B\exists y^C\vp(F_n,x,y)$, we can find $r\supseteq q$ such that  $r\fo (bd,ce)\colon \exists y^C\vp (F_n,F_d,y)$.
	In particular,  $r\fo (bd)_0=_C(ce)_0$, and so  $r\fo a_0nbd=_Ca_0mce$, 
	as desired.  This proves (i). 
	
	For (ii), let $q\supseteq p$ and suppose that $q\fo d=_Be$.  Since $q\fo (b,c)\colon \forall x^B\exists y^C\vp(F_n,x,y)$,  we can find $r\supseteq q$ such that $r\fo  ((bd)_1,(ce)_1)\colon\vp(F_n,F_d,F_{(bd)_0})$
	We claim that
	\[   r\fo (a_1nbd,a_1mce)\colon\vp(F_n,F_d,F_{a_0nb}F_d). \]
	
	Now, $|F_{a_0^rnb}F_d|_r\simeq |(a_0^rnb)^rd|\simeq a_0^rnbd\simeq (b^rd)_0$ and $|F_{(b^rd)_0}|_r= (b^rd)_0$.  By  (R2) it follows that 
	\[ r\fo   ((bd)_1,(ce)_1)\colon\vp(F_n,F_d,F_{a_0nb}F_d).   \] 
	The  claim follows. By (S), $a$ is as desired. \\

	(25) Axiom of relativized dependent choice: \[ a\colon \forall x^B[\vp(x)\imp \exists y^B(\vp(y)\land \psi(x,y))]\imp \forall x^B[\vp (x)\imp \exists z^{N\to B}(z0=x\land \forall v^N\psi(zv,z(Sv)))].\]  
	
	For ease of presentation, we assume that $x^A$ is empty.  Let $a^pbnd\simeq \langle f^pbnd, 0,g^pbnd \rangle$, where
	$f^pbndi\simeq (h^pbndi)_0$, $g^pbndi\simeq (h^pbnd(i+1))_2$, and $h$ is such that $h^pbnd0=\langle n,d,0\rangle$ and \[h^pbnd(i+1)\simeq b^p(h^pbndi)_0(h^pbndi)_1.\] 
	We claim that $a$ is as desired. 
	Note tha $a^pbnd$ is defined for all $b,n,d,p$. We use (S). Suppose $p\fo (b,c)\colon \forall x^B[\vp(x)\imp \exists y^B(\vp(y)\land\psi(x,y))]$, $p\fo n=_Bm$ and $p\fo (d,e)\colon \vp(F_n)$. We claim that  $p\fo (abnd,acme)\colon \exists z^{N\to B}(z0=F_n\land \forall v^N\psi (zv,z(Sv))$.  Note that $(a^pbnd)_1=0=(acme)_1$. It is thus sufficient  to show that:
	\begin{enumerate}
		\item[(i)] $p\fo  F_{fbnd}0=_B F_n$.
		\item[(ii)] $p\fo fbnd=_{N\to B}fcme$
		\item[(iii)]  $p\fo (gbnd,gcme)\colon  \forall v^N\psi(F_{fbnd}v,F_{fbnd}(Sv))$.
	\end{enumerate}
	Since $p\fo n\in B$, $|F_{f^pnd}0|_p\simeq f^pnd0$,  and $f^pnd0=n$, we have by (T4) that $p\fo F_{fbnd}0=_B F_n$, and so (i) holds. It remains to show (ii) and (iii).  To this end, we claim that for all $q\supseteq p$ and for all $i$ there is $r\supseteq q$ such that 
	$r\fo fbndi=_B fcmei$ and $r\fo ((hbndi)_{10},(hcmei)_{10})\colon \vp(F_{fbndi})$. By induction on $i$. Let $i=0$ and $q\supseteq p$. It is easy to see that $q$ is as desired. In fact, by monotonicity, $q\fo n=_Bm$ and $q\fo (d,e)\colon \vp(F_n)$. On the other hand, we have that  $f^qbnd0=n$, $f^qcme0=m$, $(h^qbnd0)_{10}=d$ and $(h^qcme0)_{10}=e$.  This proves $i=0$. Suppose this is true for $i$. Let us prove that claim for $i+1$. Let $q\supseteq p$. By induction, let $r\supseteq q$ satisfy the claim for $i$. As  $r\fo (b,c)\colon \forall x^B[\vp(x)\imp \exists y^B(\vp(y)\land\psi(x,y))]$, it follows from the definition of $h$ that there is $s\supseteq r$ such that $s\fo (hbnd(i+1),hcme(i+1))\colon \exists y^B(\vp(y)\land \psi(F_{fbndi}, y))$, and so $s$ is as required. This proves the claim.
	
	Clearly, (ii) follows directly from the claim. For (iii), let $q\supseteq p$ and $i$ be given. We aim to show that there is $r\supseteq q$ such that $r\fo (gbndi,gcmei)\colon \psi(F_{fbnd}F_i,F_{fbnd}(SF_i))$.  By the claim, we can find $r\supseteq q$ such that $r\fo fbndi=_B fcmei$ and $r\fo ((hbndi)_{10},(hcmei)_{10})\colon \vp(F_{fbndi})$. As $r\fo (b,c)\colon \forall x^B[\vp(x)\imp \exists y^B(\vp(y)\land\psi(x,y))]$, it follows from the definition of $h$ that there is $s\supseteq r$ such that $s\fo (hbnd(i+1),hcme(i+1))\colon \exists y^B(\vp(y)\land \psi(F_{fbndi}, y))$, and so $s\fo (gbndi,gcmei)\colon \psi(F_{fbndi},F_{fbnd(i+1)})$. As usual, by (R2), we have that $s\fo (gbndi,gcmei)\colon \psi(F_{fbnd}F_i,F_{fbnd}(SF_i))$, and hence $s$ is as required. This completes the proof of (iii). 
\end{proof}

\begin{theorem}[Self-realizability of first-order formulas]\label{self}
	Let $\vp$ be a first-order formula of $\mcal L$ with free variables among the distinct variables  $x_1,\ldots,x_k$.  Then there exist a nonempty set $T$ of finite partial functions and an index  $a$ such that:
	\begin{itemize}
		\item  If $\vp(\bar n_1,\ldots,\bar n_k)$ is true, then for all $p\in T$ there exists  $q\supseteq p$  such that 
		\[ q\fo an_1\ldots n_k\colon \vp(\bar n_1,\ldots,\bar n_k)\]
		\item If  $p\fo (b,c)\colon  \vp(\bar n_1,\ldots,\bar n_k)$, then $\vp(\bar n_1,\ldots,\bar n_k)$ is true.
	\end{itemize}
	In particular, if $\vp$ is a first-order sentence, then there exists a nonempty set $T$ of finite partial functions such that: 
	\begin{itemize}
		\item  if $\vp$ is true, then for every $p\in T$ there exist  $q\supseteq p$ and an index $a$  such that   $q\fo a\colon \vp$
		\item if $p\fo (b,c)\colon\vp$, then  $\vp$ is true
	\end{itemize}   
\end{theorem}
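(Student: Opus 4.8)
The plan is to prove both bullets simultaneously by induction on the structure of the first-order formula $\vp$, since the implication and universal cases force the two directions to be mutually dependent. The starting point is a reduction of the atomic case to ordinary truth. For a \emph{first-order} closed term $\alpha$ of $\mcal L_p$ — one built from numerals, $0$, $S$ and primitive recursive function symbols, with all adjoined constants $F_n$ of type $N$ — the value $|\alpha|_p$ is defined, independent of $p$, and equals the value of $\alpha$ in the standard model; this follows by induction on $\alpha$ from the clauses $|0|_p=0$, $|S|^pn=n+1$, and $|g(\alpha_1,\dots,\alpha_j)|_p\simeq g(|\alpha_1|_p,\dots,|\alpha_j|_p)$. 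Hence $p\fo(b,c)\colon\alpha=_N\beta$, which by definition unwinds to $p\fo\alpha=_N\beta$, i.e.\ $|\alpha|_p=|\beta|_p$, holds iff $\alpha=\beta$ is true. So for an atom we may take any nonempty $T$ and the trivial index with $a^pn=0$, and both bullets hold with $q=p$. This is exactly where first-orderness is used: type-$N$ equality is decided by term values, whereas $=_A$ at a higher type would invoke the $\sf HEO$-clause and would not track truth.

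For the induction I fix a single set $T$ once and for all and read witnesses off the oracle. Enumerate the subformula occurrences $\chi_1,\dots,\chi_s$ of $\vp$ and fix a coding of \emph{queries} $\la i,\bar v\ra$, where $i$ names an occurrence and $\bar v$ codes a tuple of numerals for its free variables. Declare a finite partial function $p$ to lie in $T$ iff every value it records is a correct witness: if $\chi_i$ is $\exists y\,\theta$ and $p(\la i,\bar v\ra)=w$ then $\theta(\bar v,\bar w)$ is true, and if $\chi_i$ is $\theta_0\lor\theta_1$ and $p(\la i,\bar v\ra)=w\in\{0,1\}$ then $\theta_w(\bar v)$ is true (this is the finite, partial analogue of the Skolem-function set used for the total-function variant in Section~\ref{goodman}). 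Then $\emptyset\in T$, so $T$ is nonempty; $T$ is definable in $\ha$, since there are finitely many fixed $\chi_i$ and each correctness clause is the concrete arithmetic formula $\theta$ itself; and $T$ has the key \textbf{extension property}: if $p\in T$ and the existential (or disjunction) named by a query $\la i,\bar v\ra$ is true, then $p$ has an extension in $T$ answering that query, obtained by recording an actual witness.

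The index $a$ is defined by recursion on $\vp$ and coincides with Goodman's. For an atom it is trivial; for $\vp_0\land\vp_1$ it pairs the two sub-indices; for an $\exists$ or $\lor$ occurrence it queries the oracle for the stored value $w=p(\la i,\bar v\ra)$, outputs $\pair{w}{r}$ with $r$ the sub-realizer (reading $w$ as the disjunct in the $\lor$ case), and recurses on the instantiated matrix; for $\vp\imp\psi$ and $\forall x^N\psi$ it discards the antecedent realizer, resp.\ the datum $n$, and runs the sub-index for $\psi$ on the present parameters and oracle. Since the queries encode the numeral parameters, this single index works uniformly for all instances $\bar n_1,\dots,\bar n_k$. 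I then carry out the induction, proving for each occurrence $\chi$ (with its free variables replaced by numerals) both bullets relative to the fixed $T$ and the corresponding sub-index. For \textbf{forcing implies truth}: the atomic case is the reduction above; $\land$, $\lor$ and $\exists$ follow from the induction hypothesis once the witness carried by the given realizer is fed to it; for $\vp\imp\psi$ one assumes $\vp$ true, uses the \emph{truth-implies-forcing} direction for $\vp$ to manufacture a $q\supseteq p$ in $T$ forcing $\vp$, triggers the forcing hypothesis to obtain $r\supseteq q$ forcing $\psi$, and concludes $\psi$ true by the hypothesis for $\psi$; the $\forall$ case is identical with the trivial instance $q\fo n=_N n$ in place of the antecedent. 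For \textbf{truth implies forcing}, dually: at $\exists$ and $\lor$ one invokes the extension property to record a correct witness and then applies the induction hypothesis and monotonicity, while at $\vp\imp\psi$ and $\forall$ one combines \emph{forcing-implies-truth} for the antecedent (resp.\ decidability of the instance) with truth-implies-forcing for $\psi$. The sentence case is the instance $k=0$, with the convention $a^p=a^p0$.

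The main obstacle is the interleaving at the implication and universal cases: the two bullets call one another, each on a proper subformula, so I must check that the induction is well founded — which it is, as every appeal is to a strictly smaller subformula — and, crucially, that the \emph{single} ambient $T$ serves all subformula occurrences in both directions at once. This is precisely why $T$ is built to store witness data for every existential and disjunctive occurrence rather than for the top-level ones alone: the $\imp$ case consumes the truth-implies-forcing direction on a subformula appearing negatively, and there the extension property must still supply genuine witnesses (which it does, since that subformula has already been shown true on the relevant branch). Getting this bookkeeping right — that recorded witnesses always suffice via the extension property, are never consulted in the forcing-implies-truth direction, and that the fixed index reads the correct oracle slot for every numeral instance — is the delicate part; the remaining verifications are routine.
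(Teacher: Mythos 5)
Your proposal is correct and takes essentially the same route as the paper's own proof: the same witness-recording set $T$ of finite partial functions (with the extension property and $\emptyset\in T$), the same Goodman indices, and the same simultaneous induction on subformulas in which the forcing-implies-truth direction at $\vp\imp\psi$ consumes the truth-implies-forcing direction for the antecedent, with first-orderness used only at the atomic case. One slip of wording: for $\forall x^N\psi$ the index must not discard the received datum but pass it to the sub-index as the parameter for the quantified variable (in the paper, $a^pnm\simeq a_0^pnm$); read as intended --- and you say the indices coincide with Goodman's --- this matches the paper exactly.
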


\begin{proof}	
	Let $\vp_1,\ldots,\vp_{l}$ be an enumeration of all subformulas of $\vp$. After renaming of bound variables, if necessary, we can choose distinct variables $\underline{x}^j=x_1^j,\ldots,x^j_{k_j}$ listing all of the free variables of $\vp_j$ so that if $\vp_j$ is $\psi\circ \chi$ then $\underline{x}^j$ are the variables for both $\psi$ and $\chi$ and if $\vp_j$ is $Q x\psi$ then $\underline{x}^j,x$ are the variables for $\psi$. We can safely assume that $x_1,\ldots,x_k$ are the variables for $\vp$

	Let $T$ be the set of all finite functions $p$ such that for all $j\in\{1,\ldots, l\}$ and for all
	$\langle j,n_1,\ldots, n_{k_j}\rangle\in\dom(p)$, if $\vp_j$ is of the form $\psi_0\lor\psi_1$ then either $p(\langle j,n_1,\ldots,n_{k_j}\rangle)=0$ and $\psi_0(\bar n_1,\ldots, \bar n_{k_j})$ is true, or $p(\langle j,n_1,\ldots,n_{k_j}\rangle)=1$ and $\psi_1(\bar n_1,\ldots, \bar n_{k_j})$ is true, and if $\vp_j$ is of the form $\exists x\psi$, then $\psi(\bar n_1,\ldots,\bar n_{k_j},\bar m)$ is true, where $m=p(\langle j,n_1,\ldots n_{k_j}\rangle)$.
	
	By induction on every subformula $\psi$ of $\vp$, we prove that every $\vp_j$ has an index $a$ with respect to variables $\underline{x}^j$.

	For the sake of brevity we write $n$ for $n_1\ldots n_{k_{j}}$. Similarly,  we write $\psi(\bar n)$ for $\psi(\bar n_1,\ldots,\bar n_{k_j})$ and $p(j,n)$ for $p(\langle j,n_1,\ldots, n_{k_j}\rangle)$. \smallskip

	(1) Suppose $\psi$ is $\alpha=_N\beta$. Let $a^pn=0$ for all $p,n$. 
	
	For every first-order closed  term $\alpha$ there exists a number $|\alpha|$ such that $|\alpha|_p=|\alpha|$ for all $p$. Indeed, $|\alpha|$ is the interpretation of $\alpha$ in the standard model. Therefore, for any first-order closed terms $\alpha$ and $\beta$, $\alpha=\beta$ is true iff $|\alpha|=|\beta|$. We thus have that $\psi(\bar n)$ is true iff $|\alpha(\bar n)|=|\beta(\bar n)|$. On the other hand,  $p\fo (b,c)\colon \psi(\bar n)$ iff $p\fo \psi(\bar n)$ iff $|\alpha(\bar n)|=|\beta(\bar n)|$. This proves (1). \\

	(2) Suppose $\psi$ is $\psi_0\land\psi_1$. Let  $a^pn\simeq \langle a_0^pn,a_1^pn\rangle$, where $a_i$ are indices for $\psi_i$. 
	
	Suppose $\psi(\bar n)$ is true and $p\in T$. Then $\psi_i(\bar n)$ is true for every $i<2$. By induction, we can find $q\supseteq p$ such that $q\fo a_i^qn\colon\psi_i(\bar n)$.  Then $q\fo an\colon \psi(\bar n)$. 
	
	If $p\fo (b,c)\colon \psi(\bar n)$, then by induction $\psi_0(\bar n)$ and $\psi_1(\bar n)$ are both true, and so is $\psi(\bar n)$.\\
	
	(3) Suppose $\psi=\vp_j$ is $\psi_0\lor\psi_1$. Let $a_i$  be an index for  $\psi_i$. Choose $a$ such that for all $n$ and for all $p\in T$,
	\[   a^pn\simeq \begin{cases}
	\langle 0,a_0^pn\rangle & \text{ if } p(j,n)=0 \\
	\langle 1, a_1^pn\rangle & \text{ if } p(j,n)=1 
	\end{cases}\] 
	
	Suppose $\psi(\bar n)$ is true and $p\in T$. If $p(j,n)$ is defined, say $p(j,n)=0$, then $\psi_0(\bar n)$ is true and by induction there is $q\supseteq p$ such that  $q\fo a_0n\colon \psi_0(\bar n)$. Then $a^qn=\langle 0,a_0^qn\rangle$ and $q\fo an\colon \psi(\bar{n})$. Suppose $p(j,n)$ is undefined. Since $\psi(\bar n)$ is true, either $\psi_0(\bar n)$ is true or $\psi_1(\bar n)$ is true. Say $\psi_1(\bar n)$ is true. Let $q\supseteq p$ be such that $q(j,n)=1$. Then $q\in T$. By induction, there exists $r\supseteq q$ such that $r\fo a_1n\colon \psi_1(\bar n)$.  As before,   $r\fo an\colon \psi(\bar{n})$.
	
	If $p\fo (b,c)\colon\psi(\bar n)$, then by induction either $\psi_0(\bar n)$ is true or $\psi_1(\bar n)$ is true, and so is $\psi(\bar n)$. \\
	
	(4) Suppose $\psi$ is $\psi_0\imp\psi_1$. Let  $a^pnb\simeq a_1^pn$, where $a_1$ is an index for $\psi_1$.
	
	Suppose $\psi(\bar n)$ is true. Let $p\in T$. Note that $a^pn$ is defined. We claim that $p\fo an\colon \psi(\bar n)$. Let $q\supseteq p$ and suppose that $q\fo (b,c)\colon \psi_0(\bar n)$. By induction, $\psi_0(\bar n)$ is true and so is $\psi_1(\bar n)$. By induction there is $r\supseteq q$ such that $r\fo a_1n\colon \psi_1(\bar n)$. Then $r\fo (anb,anc)\colon \psi_1(\bar n)$.  
	
	Suppose $p\fo (b,c)\colon \psi(\bar n)$. Let us show that $\psi(\bar n)$ is true. Suppose $\psi_0(\bar n)$ is true. We aim to show that $\psi_1(\bar n)$ is true. By induction, there is $q\supseteq p$  such that  $q\fo a_0n\colon\psi_0(\bar n)$. By definition, there is $r\supseteq q$ such that, e.g,  $r\fo ba_0n\colon \psi_1(\bar n)$. By induction, $\psi_1(\bar n)$ is true, as desired.\\

	(5) Suppose $\psi=\vp_j$ is $\exists x\psi_0$. Let $a^pn\simeq\langle p(j,n), a_0^pnp(j,n)\rangle$.
	
	Suppose $\psi(\bar n)$ is true and let $p\in T$. If $p(j,n)$ is defined, then $\psi_0(\bar n,\bar m)$, where $m=p(j,n)$, is true. By induction there is $q\supseteq p$ such that $q\fo a_0nm\colon \psi_0(\bar n,\bar m)$. By (R3), as $|\bar m|_p=|F_m^N|_p$, we have  $q\fo a_0nm\colon \psi_0(\bar n,F_m)$ and hence $q\fo an\colon \psi(\bar n)$.  If $p(j,n)$ is not defined, let $m$ be such that $\psi_0(\bar n,\bar m)$ is true. Such  $m$ exists by hypothesis. Extend $p$ to a function $q$ such that $q(j,n)=m$. Then $q\in T$. By induction there is $r\supseteq q$ such that $r\fo a_0nm\colon \psi_0(\bar n,\bar m)$. As before, $r\fo an\colon \psi(\bar n)$.
	
	If $p\fo (b,c)\colon \psi(\bar n)$, then $p\fo ((b)_1,(c)_1)\colon \psi_0(\bar n, F_m)$, where $m=(b)_0$. As before, $|F_m|_p=m=|\bar m|_p$. Therefore $p\fo ((b)_1,(c)_1)\colon \psi_0(\bar{n},\bar{m})$. By induction $\psi_0(\bar n,\bar m)$ is true and so is $\psi(\bar n)$. \\
	
	(6) Suppose $\psi$ is $\forall x\psi_0$. Let  $a^pnm\simeq a_0^pnm$, where $a_0$ is an index for $\psi_0$. Choose $a$ such that $a^pn$ is always defined.  
	
	Suppose $\psi(\bar n)$ is true. Let $p\in T$. We claim that $p\fo an\colon \psi(\bar n)$. We just need to show that for all $m$ and for all $q\supseteq p$ there is  $r\supseteq q$ such that $r\fo anm\colon \psi(\bar m, F^N_m)$.  Let $m,q$ be given. Since $\psi(\bar n)$ is true, so is $\psi_0(\bar n,\bar m)$. By induction, there is $r\supseteq q$ such that $r\fo a_0nm\colon \psi_0(\bar n,\bar m)$. The claim follows as in (5). 
	
	Finally, suppose $p\fo (b,c)\colon \psi(\bar n)$. Let $m$ be given. By definition, there is $q\supseteq p$ such that $q\fo (bn,cm)\colon \psi(\bar n,F_m)$. Since $|F_m|_q=m=|\bar m|_q$, we thus have that $q\fo (bn,cm)\colon \psi(\bar n,\bar m)$.  By induction, $\psi(\bar n,\bar m)$ is true. This shows that $\psi(\bar n)$ is true. 
\end{proof}

Formalizing the above proofs in $\ha$, we obtain Goodman's theorem for $\ehaw+\ac+\dc$.

\begin{theorem}
	$\ehaw+\ac+\dc$ is conservative over $\ha$. 
\end{theorem}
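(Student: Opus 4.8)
The plan is to obtain the conservativity by formalizing Theorems \ref{sound} and \ref{self} inside $\ha$ and then combining the two resulting statements by a single modus ponens. As remarked throughout Section \ref{extensional}, the type membership $p \fo a \in A$, the term values $|\alpha|_p$, and the forcing relation $p \fo (a,b)\colon \vp$ are all defined by recursion on the finite syntactic data of a fixed type or formula. Consequently, once a formula $\vp$ and an $\ha$-definable set $T$ of finite partial functions are fixed, the predicate $p \fo (a,b)\colon \vp$ is expressible by a single arithmetical formula of $\ha$: the clauses $\forall q\supseteq p$ and $\exists r\supseteq q$ range only over the $\ha$-definable set $T$, and for finite partial functions ``$q\supseteq p$ and $q\in T$'' is arithmetical. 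This is what is meant by the repeated assurance that the definitions and proofs can be carried out in $\ha$.

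First I would record the formalized soundness theorem: if $\ehaw+\ac+\dc\vdash\vp$ with $\vp$ a sentence, then there is an index $a$, depending only on the given derivation and not on $T$, such that for every $\ha$-definable set $T$ of finite partial functions
\[ \ha\vdash \forall p\in T\ (p\fo a\colon \vp). \]
The index $a$ is extracted exactly as in the proof of Theorem \ref{sound}, which runs by an external induction on the derivation of $\vp$ and assigns to each rule an explicit index built with the recursion theorem. The only places where an induction internal to $\ha$ is invoked are the numerical induction rule (19) and relativized dependent choice (25), where one argues by induction on a numerical parameter $i$; since $\ha$ has full induction for arithmetical formulas and the relevant forcing statements are arithmetical, these internal inductions are available.

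Next I would record the formalized self-realizability theorem: for a first-order sentence $\vp$ there is an $\ha$-definable set $T$ of finite partial functions with
\[ \ha\vdash \exists p\ (p\in T), \qquad \ha\vdash \forall p\in T\ \forall b\ \forall c\ \bigl( (p\fo (b,c)\colon \vp)\imp \vp \bigr). \]
The set $T$ is the one built in the proof of Theorem \ref{self} from an enumeration of the subformulas of $\vp$; it is arithmetically definable and contains the empty function, so nonemptiness is provable in $\ha$. The second conjunct is the internalization of the ``realizability implies truth'' direction of Theorem \ref{self}, again proved by induction on the subformulas of $\vp$. Combining the two is then immediate: fix the $\ha$-definable $T$ from self-realizability and let $a$ be the index from soundness for this $T$. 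Then $\ha\vdash \forall p\in T\ (p\fo (a,a)\colon \vp)$, and since $\ha\vdash \exists p\ (p\in T)$, reasoning inside $\ha$ we fix such a $p$, obtain $p\fo (a,a)\colon \vp$ from soundness, and apply the self-realizing implication with $b=c=a$ to conclude $\vp$; hence $\ha\vdash\vp$.

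The main obstacle is the formalization itself rather than the final combination. One must check that every clause of the definitions in Section \ref{extensional} and every step in the proofs of Theorems \ref{sound} and \ref{self} survives when read as an assertion about the $\ha$-definable relation $p\fo (a,b)\colon \vp$; in particular the monotonicity, reflexivity, symmetry, transitivity, and substitution properties (E1)--(E4), (V1)--(V2), (T1)--(T5), (R1)--(R3) must be provable in $\ha$ uniformly in $T$. A subtler point, highlighted in the discussion at the end of Section \ref{goodman}, is that the amount of induction needed to verify a given instance grows with the quantifier complexity of $\vp$, through the complexity of the forcing conditions; this is harmless here because $\vp$ is fixed and $\ha$ has unrestricted induction, but it is precisely what obstructs the analogous conservativity over subsystems with restricted induction. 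Finally, the use of finite partial functions (rather than total functions, which as discussed in Section \ref{goodman} would only yield conservativity over $\pa$ via $\aca$) is essential exactly so that $\ha\vdash \exists p\ (p\in T)$ holds with the witness $\emptyset$.
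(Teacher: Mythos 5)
Your proposal is correct and follows essentially the same route as the paper: the paper's proof is precisely the one-line observation that formalizing Theorems \ref{sound} and \ref{self} in $\ha$ (in the manner spelled out for the intensional case at the end of Section \ref{goodman}) yields the conservation result, and your combination of the uniform index $a$ from soundness, the provable nonemptiness of the definable set $T$ (witnessed by $\emptyset$), and the realizability-implies-truth direction of self-realizability is exactly the intended argument. Your remarks on uniformity of $a$ in $T$, on the arithmetical definability of the forcing relation for fixed $\vp$ and $T$, and on why finite partial rather than total functions are needed all match the paper's own discussion.
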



\section{Realizing choice and extensionality} \label{realizability}
We provide a realizability interpretation  $a\colon \vp$  such that for every sentence $\vp$ of $\mcal L$,  if 
\[    \ehaw+\ac+\dc\vdash \vp, \]
then there exists an index $a$ such that \[ \ha\vdash  a\colon  \vp. \] 

The idea is to combine $\sf HEO$ with a version of Kleene recursive realizability whose realizers are elements of $\sf HEO$.
\[ \ehaw+\ac+\dc  \vdash \vp\ \ \xrightarrow{{\sf HEO}\ +\ \text{Kleene}}\ \  \ha\vdash a\colon \vp\]

One can also define a realizability notion that combines  Kreisel's modified realizability with the $\sf HEO$ interpretation. This way also independence of premise for $\exists$-free formulas \cite[Section 5.1]{kohl99} is realizable.  
Warning: we write  $a\colon \vp$ for ease  of notation, although the meaning is different from Section \ref{extensional}.\\

As before, the discussion is informal. The reader will convince himself that all the definitions and proofs can be carried out in $\ha$. \\ 

Convention.   We write, e.g., $ab=_Acd$  for $ab$ and $cd$ are defined and $ab=_Acd$. Similarly for $ab\in A$ and $ab\colon \vp$.  \\

(1) For numbers $a,b$ and type $A$, we define $a=_Ab$ such that $(A,=_A)$ is a model of $\ehaw$, where $a\in A$ means $a=_Aa$. By recursion on $A$. 

\begin{itemize}
	\item $a=_Nb$ iff $a=b$
	\item $a=_{A\times B} b$ iff $(a)_0=_A(b)_0$ and $(a)_1=_B(b)_1$
	\item $a=_{A\to B} b$ iff $n=_Am$ implies $an=_Bbm$
\end{itemize}

This a version of $\sf HEO$. Properties: (E1) Reflexivity. If $a=_Ab$ then $a,b\in A$. By definition, if $a\in A$ then $a=_Aa$. Symmetry and transitivity. (E2) $a=_{A\to B}b$ iff $a,b\in A\to B$ and for all $n\in A$, $an=_Bbn$.  (E3) For every type $A$ there is a number $0^A\in A$.  Let $0^N=0$, $0^{A\times B}=\pair {0^A}{0^B}$, and $0^{A\to B}$ be an index for the function $\lambda n. 0^B$. \\

(2) Let $\mathcal L^*$ be the language $\mcal L$ of $\haw$ plus a constant $F_a^A$ of type $A$ for every $a\in A$. We simply write $F_a$ when the type is clear from the context. 
Let us define the value $|\alpha|$ of a closed term $\alpha$ of $\mathcal L^*$ by recursion on $\alpha$:
\begin{itemize}
	\item $|0|=0$
	\item $|S|=\ind n.n+1$ is any index for the function $\lambda n.n+1$.
	\item $|g(\alpha_1,\ldots,\alpha_k)|\simeq g(|\alpha_1|,\ldots,|\alpha_k|)$, for $g$ primitive recursive and $\alpha_i$ of type $N$
	\item $|\varPi|=\ind ab.a$ 
	\item $|\varSigma|=\ind abn.(an)(bn)$   
	\item $|R|$ is  such that
	$|R|ab0=a$ and  $|R|(a,b,n+1)\simeq b(|R|abn,n)$
	\item $|D|=\ind ab.\pair ab$ 
	\item $|D_0|=\ind a. (a)_0$ and $|D_1|=\ind a. (a)_1$
	\item $|\alpha\beta|\simeq \kl{|\alpha|}(|\beta|)$
	\item $|F_a^A|= a$
\end{itemize}
$|R|$ exists by the recursion theorem. Properties. (V1) For every closed term $\alpha$ of type $A$, the number $|\alpha|$ is defined. (V2) For any term $\alpha$ with variables among the distinct variables $x_1,\ldots,x_k$, there exists an index $d$ such that for all $n_1,\ldots, n_k$,   $dn_1\ldots n_k\simeq |\alpha(F_{n_1},\ldots, F_{n_k})|$.  \\

For $\alpha$ and $\beta$ closed terms of type $A$, we define $\alpha=_A\beta$ iff $|\alpha|=_A|\beta|$. In particular, $\alpha\in A$ iff $|\alpha|\in A$. \smallskip

Properties. (T1) For every closed term $\alpha$ of type $A$,  $\alpha\in A$ (easy induction on $\alpha$).  (T2) If $\gamma(x^A)$ is a term of type $B$, $\alpha$ and $\beta$ are closed terms of type $A$, and $\alpha=_A\beta$, then $\gamma(\alpha)=_B\gamma(\beta)$. In particular,  if $|\alpha|=|\beta|$ then  $\gamma(\alpha)=_B\gamma(\beta)$. \\

(3) Type of a formula. We define the type of a formula $\varphi$ of $\mathcal L^*$ as follows.
The type of an atomic formula is $N$. Suppose $\vp $ has type $A$ and $\psi$ has type $B$. Then the type of $\varphi\land \psi$ and $\exists x^A\psi(x)$ is $A\times B$. 
The type of $\varphi\lor\psi$ is $N\times A\times B$. The type of $\varphi\imp \psi$ and $\forall x^A\psi(x)$ is $A\to B$.  
Note that if $\varphi(x)$ has type $A$, so does $\varphi(\alpha)$. If $\vp$ has type $A$,  we write $\vp^A$.\\

(4) Realizability. For a number $a$ and a sentence $\varphi$ of $\mathcal L^*$ we define $a\colon \varphi$ by recursion on $\varphi$:
\begin{itemize}
	\item Atomic case: $a \colon \alpha=_A\beta$ iff  $\alpha=_A\beta$ (iff $|\alpha|=_A|\beta|$)
	\item $a\colon \varphi\land \psi$ iff  $(a)_0\colon \varphi$ and  $(a)_1\colon \psi$
	\item $a\colon \varphi^A \lor \psi^B$ iff  $a\in N\times A\times B$ and  either $(a)_0=0$ and $(a)_1\colon \varphi$ or $(a)_0=1$ and  $(a)_2\colon \psi$
	\item $a\colon \varphi^A\imp \psi^B$ iff   $a\in A\to B$ and
	if $b\colon \varphi$ then $ab\colon  \psi$,
	\item $a\colon \exists x^A\varphi(x)$ iff  $(a)_0\in A$ and  $(a)_1\colon \varphi(F^A_{(a)_0})$.
	\item $a\colon \forall x^A\varphi^B(x)$ iff   $a\in A\to B$ and 
	if $n\in A$ then    $an\colon \varphi(F_n^A)$,
\end{itemize}
The clause for $\lor$  bears similarity with Kreisel's modified realizability. One could introduce disjoint types to deal with $\lor$. We prefer this solution. \smallskip

Properties.  
(R1) $a\colon \varphi^A$ implies $a\in A$ (by induction on $\vp$). (R2) If $\alpha=_A\beta$ and $a\colon \varphi(\alpha)$, then $a\colon \varphi(\beta)$.

\begin{proof}[Proof of (R2)]
	By induction on $\varphi$. For the atomic case we have to show that if $\alpha=_A\beta$ and $\gamma(\alpha)=_B\delta(\alpha)$ then $\gamma(\beta)=_B\delta(\beta)$. It follows from (T2) and the transitivity of $=_B$. The inductive cases are straightforward. 
\end{proof}

In the proof of the soundness we  use the  following property to verify that $a\colon \vp$. \smallskip

(S) Let $\vp$ be $\forall x_1\ldots \forall x_k(\vp_1\imp\ldots \imp \vp_{l+1})$. Say $x_i$ has type $A_i$ and $\vp_i$ has type $B_i$.
\begin{itemize}
	\item Suppose $a\colon \vp$. If $n_i=_{A_i}m_i$ for $i=1,\ldots,k$, $b_i=_{B_i}c_i$ and $b_i \colon \vp_i(F_{n_1},\ldots , F_{n_k})$ for $i=1,\ldots,l$,  then $an_1\ldots n_kb_1\ldots b_l=_{B_{l+1}}am_1\ldots m_kc_1\ldots c_l$ and \[ an_1\ldots n_kb_1\ldots b_l\colon \vp_{l+1}(F_{n_1},\ldots F_{n_k}). \]
	\item Suppose $an_1\ldots n_kb_1\ldots b_{l-1}$ is defined for all $n_i,b_i$. Moreover, suppose that  if $n_i=_{A_i}m_i$ for $i=1,\ldots,k$, $b_i=_{B_i}c_i$ and $b_i\colon\colon \vp_i(F_{n_1},\ldots , F_{n_k})$ for $i=1,\ldots,l$,  then $an_1\ldots n_kb_1\ldots b_l=_{B_{l+1}}am_1\ldots m_kc_1\ldots c_l$ and \[ an_1\ldots n_kb_1\ldots b_l\colon \vp_{l+1}(F_{n_1},\ldots F_{n_k}). \]
	Then $a\colon \vp$.  
\end{itemize} 

Also consider  the following  instances of (S). \smallskip

(S1) Let $\vp$ be $\forall x_1\ldots \forall x_k\vp$. Say $x_i$ has type $A_i$ and $\vp$ has type $B$. Then $a\colon \vp$ iff for all $n_i,m_i$  such that $n_i=_{A_i}m_i$ for $i=1,\ldots, k$, 
we have that $an_1\ldots n_k=_Bam_1\ldots m_k$ and $an_1\ldots n_k\colon \vp(F_{n_1},\ldots,F_{n_k})$.\smallskip 

(S2) Let $\vp$ be $\vp_1\imp\ldots\imp \vp_l\imp \vp_{l+1}$. Say $\vp_i$ has type $B_i$.
\begin{itemize}	\item Suppose $a\colon \vp$, $b_i=_{B_i}c_i$ and $b_i\colon \vp_i$ for all $i=1,\ldots,l$. Then $ab_1\ldots b_l=_{B_{l+1}}ac_1\ldots c_l$ and $ab_1\ldots b_l\colon \vp_{l+1}$. 	\item Suppose that $ab_1\ldots b_{l-1}$ is defined for all $b_i$. Moreover, suppose that whenever $b_i=_{B_i}c_i$ and $b_i\colon \colon \vp_i$ for all $i=1,\ldots,l$, then $ab_1\ldots b_l=_{B_{l+1}}ac_1\ldots c_l$ and $ab_1\ldots b_l\colon \vp_{l+1}$. Then $a\colon \vp$.
\end{itemize}

\begin{theorem}[Soundness]
	Suppose $\vp$ is a formula of $\mcal L$ of type $B$ such that $\ehaw+\ac+\dc\vdash \vp$.  If the free variables of $\vp$ are among the distinct variables $x_1,\ldots,x_k$, then
	there exists an index $a$ such that $a\colon \forall x_1\ldots\forall x_k\vp$. 
	In particular, if $\vp$ is a sentence, then there is an index $a$ such that $a\colon \vp$. 
\end{theorem}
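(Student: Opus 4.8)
The plan is to proceed by induction on the derivation of $\vp$ in $\ehaw+\ac+\dc$, using the very same indices as in the proof of Theorem \ref{sound} (and, ultimately, as in Goodman \cite[Theorem 1]{goodman}). The entire argument is the forcing-free shadow of that earlier proof: every clause of the form ``there is $r\supseteq q$ with $r\fo\ldots$'' collapses to a single conjunction of an equality statement and a realizability statement, and the role previously played by monotonicity of the forcing relation is taken over by the extensional equality $=_A$ together with properties (R2) and (T2). At each inductive step I verify the two hypotheses of (S), or of its instances (S1) and (S2): I must check not only that the proposed index realizes $\vp_{l+1}$, but also that it respects $=_A$, i.e.\ that it sends $=$-equal arguments to $=$-equal values. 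This extra bookkeeping -- confirming that realizers are honest HEO functionals -- is the only real difference from Section \ref{extensional}, and in the propositional and first-order cases it is automatic from (E2).

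For the logical rules (1)--(12) the realizers are the obvious projections, pairings, and compositions, and preservation of $=_A$ follows because application in HEO is extensional by (E2); the variables that occur free in a premise but not in the conclusion are handled, exactly as in cases (2), (3), (10), (11) of Theorem \ref{sound}, by padding with the canonical element $0^B\in B$ furnished by (E3). The axioms for constants (13)--(18) reduce via (S) to checking that $(A,=_A)$ validates the relevant defining equations; since this structure is by construction a model of $\ehaw$ and $|\varPi|,|\varSigma|,|R|,|D|,|D_i|$ were chosen to satisfy their equations ($|R|$ by the recursion theorem), each such axiom is realized by the constant index $0$. Induction (19) is discharged by an inner induction on the numeral, realized by the recursor-style index with $an0\simeq bn$ and $an(i+1)\simeq cni(ani)$, using (R2) to replace $SF_i$ by $F_{i+1}$ at each step.

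The two genuinely interesting cases are extensionality and choice, and both come out almost for free precisely because realizers now live in HEO. Extensionality (23) is realized by the constant index $0$: a realizer of the hypothesis $\forall z^B(xz=_Cyz)$ witnesses $n_1d=_Cn_2d$ for every $d\in B$, which by (E2) already forces $n_1=_{B\to C}n_2$, so the conclusion holds with no work. For the axiom of choice (24), given a realizer $b\colon\forall x^B\exists y^C\vp(x,y)$, I take the witness for $\exists z^{B\to C}$ to be $z=\lambda d.(bd)_0$ and the realizer of $\forall x^B\vp(x,zx)$ to be $\lambda d.(bd)_1$, so that the implication is realized by $ab=\langle z,\lambda d.(bd)_1\rangle$. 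The crucial point -- the payoff of taking realizers to be HEO elements -- is that $b\in B\to(C\times D)$ is \emph{extensional}: if $n=_Bm$ then $(bn)_0=_C(bm)_0$, so $z$ genuinely lies in $B\to C$ and is exactly the HEO functional required to inhabit $\exists z^{B\to C}$. Relativized dependent choice (25) is then handled as in Theorem \ref{sound}, iterating the premise by an index $h$ obtained from the recursion theorem with $hbnd0=\langle n,d,0\rangle$ and $hbnd(i+1)\simeq b(hbndi)_0(hbndi)_1$, extracting the path and the step-witnesses by projection, and checking by induction on $i$ that the resulting object is an extensional element of $N\to B$. I expect the main obstacle to be exactly this extensionality bookkeeping in the choice cases: one must confirm that projecting out components of an HEO realizer preserves $=_A$, so that the extracted Skolem functional is a bona fide element of HEO rather than a mere recursive index.
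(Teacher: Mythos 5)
Your proposal follows the paper's own proof essentially step for step: induction on the derivation, verification of the hypotheses of (S)/(S1)/(S2) (realization plus preservation of $=_A$), the same indices for choice ($a_inbd\simeq (bd)_i$) and relativized dependent choice ($h$ via the recursion theorem with $hbnd0=\langle n,d,0\rangle$), and the same key observation that extensionality and choice come out nearly for free because realizers are now HEO functionals. One correction, which the paper itself flags by saying the indices are only \emph{mostly} the same as in Theorem \ref{sound}: for the disjunction-related cases (5), (7), and (21) the indices cannot be literally identical, since here a realizer of $\vp^{B_0}\lor\psi^{B_1}$ must inhabit the type $N\times B_0\times B_1$, so the unused slot has to be padded with the canonical element (e.g.\ $anb=\langle 0,b,0^{B_1}\rangle$ rather than $\langle 0,b\rangle$ in (5)), and in (21) the component realizing $\neg(F_{n_1}=F_{n_2})$ must be a genuine element $b\in N\to N$ rather than the number $0$; this is a small repair, carried out by the same $0^B$-padding device you already invoke for the free-variable bookkeeping, but without it the type-membership half of (S) fails in those three cases.
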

\begin{proof}
	The indices are mostly the same as in the proof of Theorem \ref{sound}. 
	Only the indices for axioms and rules involving $\lor$, that is, (5), (7), and (21),  are slightly different.  The bulk of the proof is to check that the indices have the right type. The verification is routine. 
	For the sake of completeness, we write down the details.\\

	Assume $x_i$ is of type $A_i$. For the sake of brevity, we write $A$ for $A_1,\ldots, A_k$ and  $x^A$ for $x_1,\ldots,x_k$. For the remainder of the proof we assume that the free variables of $\vp$ are among $x^A$. We thus write $a\colon \forall x^A\vp$ or simply $a\colon \vp$ for $a\colon \forall x_1\ldots\forall x_k\vp$.   We write, e.g., $n$ for $n_1\ldots n_k$ so that $an$ stands for $an_1\ldots n_k$ and $n=_Am$ stands for $ n_1=_{A_1}m_1, \ldots, n_k=_{A_k}m_k$. Similarly, we write $\vp(F_n)$ for $\vp(F_{n_1},\ldots,F_{n_k})$.  Let $A\to B$ denote $A_1\to A_2\to \ldots \to A_k\to B$.\\
	
	We repeatedly use (S) to show that $a\colon \vp$. \\

	

	(1) $a\colon \vp^B\imp\vp^B$, where $anb=b$.  
	
	It is easy to see that $a\in A\to B\to B$ is an index for $\vp\imp\vp$.\\
	
	(2) Modus ponens: if $c\colon \vp^C$ and $b\colon \vp^C\imp\psi^D$ then $a\colon \psi^D$. 
	
	Suppose that there are variables, say $x^B$, that occur free in $\vp$ but are not among  $x^A$. Then the free variables of $\vp$ and $\vp\imp\psi$ are among $x^A,x^B$. By induction, let  $c\in A\to B\to C$  be an index for $\varphi$ and $b\in A\to B\to C\to D$ be an index for $\vp\imp \psi$. Choose $a$  such that $an\simeq bn0^B(cn0^B)$, where $0^B\in B$ are fixed.  We claim that  $a\in A\to D$ is as desired.
	
	By (S1), let us check that if $n=_{A}m$ then  $an=_Dam$ and $an\colon \psi(F_n)$.  
	Clearly, $0^B=_B0^B$. By induction, 
	$cn0^B=_Ccm0^B$, $cn0^B\colon \vp(F_{n},F_{0^B})$, $bn0^B=_{C\to D}bm0^B$, and  $bn0^B\colon \vp(F_{n},F_{0^B})\imp\psi(F_{n})$. It thus follows that $bn0^B(cn0^B)=_Dbm0^B(cm0^B)$, that is $an=_Dam$, and $bn0^B(cn0^B)\colon \psi(F_n)$, that is, $an\colon\psi(F_n)$, as desired. \\
	
	(3) Syllogism: if $c\colon \vp^C\imp\psi^D$ and $b\colon \psi^D\imp\chi^E$ then $a\colon \vp^C\imp\chi^E$. 
	
	Suppose that there are variables, say $x^B$, that occur free in $\psi$ but are not among $x^A$.  By induction, let $c\in A\to B\to C\to D$ be an index for $\vp\imp\psi$ and $b\in A\to B\to D\to E$ be an index for $\psi\imp\chi$.  Choose $a$ such that  $and\simeq bn0^B(cn0^Bd)$.  We claim that $a\in A\to C\to E$ is as desired.  
	
	By (S), it is enough show that if $n=_{A}m$, $d=_Ce$ and $d\colon \vp(F_n)$ then $and=_Eame$ and $and\colon \chi(F_n)$. By induction, $cn0^Bd=_Dcm0^Be$ and $cn0^Bd\colon\psi(F_n)$. On the other hand, $bn0^B=_{D\to E}bm0^B$ and $bn0^B\colon (\psi\imp \chi)(F_n)$. It thus follows that $bn0^B(cn0^Bd)=_E bm0^B(cm0^Be)$, that is, $and=_Eame$, and  $bn0^B(cn0^Bd)\colon \chi(F_n)$, that is, $and\colon\chi(F_n)$. \\

	(4) $a_i\colon \psi_0\land\psi_1\imp\psi_i$, where $anb=(b)_i$. Say $\psi_i$ has type $B_i$.
	
	Let us show that $a_i\in A\to B_0\times B_1\to B_i$ is as desired. By (S), it suffices to show that if $n=_Am$, $b=_{B_0\times B_1}c$ and $b\colon (\psi_0\land\psi_1)(F_n)$ then $anb=_{B_i}amc$ and $anb\colon \psi_i(F_n)$. Easy. The conlusion follows immediately from the definitions of equality and realizability for product types.\\
	
	(5) $a_i\colon \psi_i\imp\psi_0\lor\psi_1$. Say $\psi_i$ has type $B_i$.
	
	Case $i=0$. Choose $a$ such that $anb=\langle 0,b,0^{B_1}\rangle$. Let us show that $a\in A\to B_0\to N\times B_0\times B_1$ is as desired. Suppose $n=_Am$, $b=_{B_0}c$ and $b\colon \psi_0(F_n)$. By (S), it suffices to show that $anb=_{N\times B_0\times B_1}amc$ and $anb\colon (\psi_0\lor\psi_1)(F_n)$. Easy. The conclusion follows from the definitions of equality for product types and  realizability for disjunctions. 
	
	The case $i=1$ is similar. Choose $a$ such that $anb=\langle 1,0^{B_0},b\rangle$.\\

	(6) If $b\colon \chi^B\imp\vp^C$ and $c\colon \chi^B\imp\psi^D$ then $a\colon \chi^B\imp(\vp^C\land\psi^D)$, where $and\simeq \langle bnd, cnd\rangle$. 
	
	First, note that the free variables of the premises are among $x^A$. By induction, let $c\in A\to B\to C$ and $b\in A\to B\to D$ be indices for $\chi\imp\vp$ and $\chi\imp\psi$ respectively. Let us show that $a\in A\to B\to C\times D$ is as desired. Since $an$ is defined for all $n$, it is enough to show by (S) that if $n=_Am$, $d=_Be$ and $d\colon \chi(F_n)$ then $and=_{C\times D}ame$ and $and\colon (\vp\land\psi)(F_n)$. This is straightforward. \\

	(7) If $b\colon \vp^B\imp\chi^D$ and $c\colon \psi^C\imp\chi^D$ then $a\colon \vp^B\lor\psi^C\imp\chi^D$, where \[ and\simeq \begin{cases}  bn(d)_1 &  \text{ if } (d)_0=0\\
	cn(d)_2 &   \text{ otherwise}
	\end{cases} \]
	We claim that $a\in A\to N\times B\times C\to D$  is as desired. Note that $an$ is defined for all $n$. By (S), it suffices to show that if $n=_Am$, $d=_{N\times B\times C}e$ and $d\colon (\vp\lor\psi)(F_n)$ then $and=_Dame$ and $and\colon \chi(F_n)$. Supose $(d)_0=0$. The other case is similar. Then $(d)_1=_B(e)_1$ and $(d)_1\colon \vp(F_n)$. By induction, $bn(d)_1=_Dbn(e)_1$, that is, $and=_Dame$,  and $bn(d)_1\colon \chi(F_n)$, that is, $and\colon \chi(F_n)$, as desired.  \\

	(8) If $b\colon \vp^B\land\psi^C\imp\chi^D$ then  $a\colon \vp^B\imp(\psi^C\imp\chi^D)$, 
	where $ance\simeq bn\langle c,e\rangle$. If $b\colon \vp^B\imp(\psi^C\imp\chi^D)$ then $a\colon  \vp^B\land\psi^C\imp\chi^D$, where $anc\simeq bn(c)_0(c)_1$.
	
	For the first rule, let us show that $a\in A\to B\to C\to D$ is as desired. Note tha $anc$ is defined for all $n,c$. By (S), it suffices to show that if $n=_Am$, $c=_Bd$ with $c\colon \vp(F_n)$, and $e=_Cf$ with $e\colon \psi(F_n)$, then $ance=_Damdf$ and $ance\colon \chi(F_n)$. Straightforward. 
	
	The second rule is similar. \\
	
	(9) $a\colon 0=S0\imp\vp^B$, where $anb=0$.
	
	Then $a\in A\to N\to B$ is an index for $0=S0\imp\vp$. Not that no $b$ realizes $0=S0$.  \\
	
	(10) If $b\colon \vp^C\imp\psi^D$ then $a\colon \vp^C\imp\forall x^B\psi^D$, for $x$ not free in $\vp$. 
	
	Suppose  $x^B$ is not among $x^A$.  Then the free variables of $\vp\imp\psi$ are among $x^A,x^B$. By induction, let $b\colon A\to B\to C\to D$ be an index for $\vp\imp \psi$. Choose $a$ such that $ance\simeq bnec$.  We claim that $a\in A\to C\to B\to D$ is an index for $\vp\imp\forall x\psi$. 
	
	Note that $anc$ is defined for all $n,c$. By (S), it is enough to show that is  $n=_Am$, $c=_C d$ and $c\colon \vp(F_n)$, then $anc=_{B\to D}amd$ and $anc\colon \forall x\psi(F_n,x)$. Suppose $e=_Bf$. We aim to show that $ance=_Damdf$ and $ance\colon \psi(F_n,F_e)$. By induction, $bnec=_Dbmfd$ and $bnec\colon \psi(F_n,F_e)$. By definition, $a$ is as required.
	
	If $x^B$ is among $x_1,\ldots,x_k$, say $x^B$ is $x_i$, then the free variables of $\vp\imp\psi$ are also among $x_1,\ldots, x_k$. Choose $a$ such that $ance\simeq bn_1\ldots n_{i-1}en_{i+1}\ldots n_kc$.
	Then $a$ is as desired. The proof is similar. \\

	(11) If $b\colon \vp^C\imp\psi^D$ then $a\colon \exists x^B\vp^C\imp\psi^D$, for $x$ not free in $\psi$.  
	
	Suppose that  $x^B$ is not among $x^A$.  Then the free variables of $\vp\imp\psi$ are among $x^A, x^B$. By induction, let  $b\in A\to B\to C\to D$ be an index for $\vp\imp\psi$.   Choose $a$ such that $anc \simeq bn(c)_0(c)_1$.   We claim that $a\in A\to B\times C\to D$ is as desired. 
	
	By (S), it is enough to show that if $n=_Am$, $c=_{B\times C}d$ and $c\colon \exists x^B\vp(F_n,x)$, then $anc=_Damd$ and $anc\colon \psi(F_n)$. By assumption, $(c)_0=_B(d)_0$, $(c)_1=_C(d)_1$ and $(c)_1\colon \vp(F_n,F_{(c)_0})$.  By induction, $bn(c)_0(c)_1=_Dbm(d)_0(d)_1$ and $bn(c)_0(c)_1\colon \psi(F_n)$. By definition, $a$ is as required.    
	
	If $x^B$ is among $x_1,\ldots,x_k$, say $x^B$ is $x_i$, then the free variables of $\vp\imp\psi$ are also among $x_1,\ldots, x_k$. Choose $a$ such that $anc\simeq bn_1,\ldots, n_{i-1},(c)_0,n_{i+1},\ldots, n_k(c)_1)$. Then $a$ is as desired. The proof is similar.  \\

	(12) $a\colon \forall x^B\vp^C\imp\vp^C(\alpha)$ and $a\colon \vp^C(\alpha)\imp\exists x^B\vp^C$, for any term $\alpha$ free for $x^B$ in $\vp$,  where  $anb\simeq b(dn)$ and   $anb\simeq \langle dn,b\rangle$ respectively, and $dn\simeq |\alpha(F_n)|$. The index $d$ exists by (V2). 
	
	
	Let us check that  $a\in A\to (B\to C)\to C$ is an index for the first axiom. By (S), it suffices to show that if $n=_Am$, $b=_{B\to C}c$ and $b\colon \forall x^B\vp(F_n,x)$, then $anb=_Camc$ and $anb\colon \vp(F_n,\alpha(F_n))$. By assumption, $F_n=_AF_m$ and hence by (T2) we have that $\alpha(F_n)=_B\alpha(F_m)$, that is, $dn=_Bdm$. By the assumption on $b$, it follows that $b(dn)_Cb(dm)$ and $b(dn)\colon \vp(F_n,F_{dn})$. As $F_{dn}=_B\alpha(F_n)$, by (R2) we have that $b(dn)\colon \vp(F_n,\alpha(F_n))$. By definition, $a$ is as required. 
	
	For the second axiom, let us check that $a\in A\to C\to B\times C$ is as desired. By (S), it suffices to show that if $n=_Am$, $b=_Cc$ and $b\colon \vp(F_n,\alpha(F_n))$, then $anb=_{B\times C}amc$ and $anb\colon \exists x^B\vp(F_n,x)$, that is, $dn=_Bdm$, $b=_C c$, $dn\in B$ and $b\colon \vp(F_n,F_{dn})$. It is clearly enough to prove that $dn=_Bdm$ and $b\colon \vp(F_n,F_{dn})$. As before, $\alpha(F_n)=_B\alpha(F_m)$, that is, $dn=_Bdm$. Also, $\alpha(F_n)=_BF_{dn}$, and hence by (R2) it follows that $b\colon \vp(F_n,F_{dn})$, as required.  \\

	(13) $a\colon\neg(0=Sx)$ and $a\colon Sx=Sy\imp x=y$, for  $x,y$ variables of type $N$, where $anb=0$.
	
	Easy. \\

	All the axioms from (14) to  (18) are given by atomic formulas (type $N$). Choose $a$ such that $an=0$ for all $n$. It is routine to check that $a\in A\to N$ is as required. \\ 
	
	(19) Induction: If $b\colon \vp^C(0)$ and $c\colon vp^C(x)\imp\vp^C(Sx)$ then $a\colon \vp^C(x)$, for $x$ of type $N$, where  $an0\simeq bn$ and $an(i+1)\simeq  cni(ani)$. 
	
	We may assume that $x^A$ is $x^B, x^N$.  By induction, let $b\in B\to C$ be an index for $\vp(0)$ and $c\in A\to C\to C$ be an index for $\vp(x)\imp\vp(x+1)$.  We claim that $a\in A\to C$ is an index for $\vp(x)$. 
	
	By (S1), it is enough to show that for all $n,m,i$, if  $n=_Bm$ then 
	$ani=_Cami$ and $ani\colon \vp(F_n,F_i)$. By induction on $i$. Let $i=0$. By induction, $bn=_Cbm$ and $bn\colon \vp(F_n,0)$. By (R2), $bn\colon \vp(F_n,F_0)$. Since $an0=bn$ and $am0=bm$, we are done. Suppose this is true for $i$ and let us prove the claim for $i+1$. By the assumption, $cni=_{C\to C}cmi$ and $cni\colon \vp(F_n,F_i)\imp \vp(F_n,SF_i)$. It follows from the induction hypothesis that $cni(ani)=_Ccmi(ami)$ and $cni(ani)\colon \vp(F_n,SF_{i})$. By (R2), $cni(ani)\colon \vp(F_n,F_{i+1})$. The claim follows from the definition of $a$. \\
	
	(20) $a\colon x=x$ for $x$ of type $A$, where $an=0$. Then $a\in A\to N$ is an index for $x=x$. 
	
	Easy.  \\
	
	(21)  $a\colon x=y\lor \neg (x=y)$ for $x$ of type $N$.
	
	We may assume that $x^A$ is $x,y,\ldots$ Choose $a$ such that $an=\langle 0,0,b\rangle$ if $n_1=n_2$, and $an=\langle 1,0,b\rangle$ otherwise, where $bc=0$ for all $c$.  Let us show that $a\in  A\to N\times N\times (N\to N)$ is as desired. Let $B=N\times N\times (N\to N)$. Note that $b\in N\to N$. 
	
	By (S), it is sufficient to show that if $n=_Am$ then $an=_Bam$ and $an\colon F_{n_1}=F_{n_2}\lor \neg(F_{n_1}=F_{n_2})$. Here we use the fact that equality between natural numbers is decidable.  Suppose $n_1=n_2$. As $n=_Am$, we have that $m_1=m_2$ and so $an=\langle 0,0,b\rangle=am$. It follows that $an=_Bam$. On the other hand, $0\colon F_{n_1}=F_{n_2}$, and therefore $an$ is as desired. Suppose $n_1\neq n_2$. As $n=_Am$, it follows that $m_1\neq m_2$ and so $an=\langle 1,0,b\rangle= am$. As before, $an=_Bam$. It remains to show that $an$ is a realizer, that is, $b\colon \neg(F_{n_1}=F_{n_2})$. Since $b\in N\to N$, it is sufficient to show that if $c\colon F_{n_1}=F_{n_2}$ then $bc\colon 0=S0$. Since $c\colon F_{n_1}=F_{n_2}$ iff $n_1=n_2$, the conclusion follows from ex falso.   \\
	
	(22)  Leibniz: $a\colon x=_B y\land \varphi^C(x)\imp \varphi^C (y)$, where $anb=(b)_1$. 
	
	We may assume that $x^A$ is $x,y,\ldots$ Let us show that $a\in A\to N\times C\to C$ is as desired.  Let  $n=_{A} m$,  $b=_{N\times C}c$ and $b\colon F_{n_1}=_BF_{n_2}\land \vp(F_n)$. By (S), we aim to show that $anb=_C amc$ and $anb\colon \vp(F_{n_2},F_{n_2},\ldots, F_{n_k})$.  The first part is straightforward. For the second part, we have by the assumption that $(b)_0\colon F_{n_1}=_BF_{n_2}$, that is, $n_1=_Bn_2$, and $(b)_1\colon  \vp(F_{n})$. Then $anb\colon \vp(F_n)$. The conclusion follows from  (R2). \\

	(23) Extensionality: $a\colon\forall z^B(xz=_Cyz)\imp x=_{B\to C}y$, where $anb=0$.
	
	We may assume that $x^A$ is $x,y,\ldots$ Let us check that  $a\in A\to (B\to N)\to N$ is as desired.  Suppose $n=_Am$,  $b=_{B\to N}c$ and $b\colon  \forall z^B(F_{n_1}z=_CF_{n_2}z)$. By (S), it is sufficient to show that $anb=amc$ and $anb\colon F_{n_1}=_{B\to C}F_{n_2}$, that is $n_1=_{B\to C}n_2$. The first part is trivial, since $anb=0=amc$. For the second part, note that $n_1,n_2\in B\to C$. By (E2),  it is sufficient to show  that if $d\in B$ then $n_1d=_Cn_2d$. By the assumption, if $d\in B$ then $bd\colon F_{n_1}F_d=_CF_{n_2}F_d$. It follows that $n_1d=_Cn_2d$, as desired. \\

	(24) Axiom of choice:
	$a\colon \forall x^B\exists y^{C_0}\varphi^{C_1}(x,y)\imp \exists z^{B\to C_0}\forall x^B\varphi^{C_1}(x,zx)$, where
	$anb=\pair{{a_0}nb}{a_1nb}$, and $a_inbd\simeq (bd)_i$.   We claim that \[ a\in A\to \underbrace{(B\to C_0\times C_1)}_D\to \underbrace{(B\to C_0)\times (B\to C_1)}_E \]
	is as desired.  Note that $anb$ is defined everywhere. Let $n=_Am$, $b=_{D}c$ and $b\colon \forall x\exists y\vp(F_n,x,y)$. By (S), it is sufficient to show  (i) $anb=_{E} amc$, that is, $a_inb=_{B\to C_i}a_imc$ for $i<2$, and (ii) $anb\colon \exists z\forall x\vp(F_n,x,zx)$. 	
	
	Let us prove  (i). Let $d=_Be$. From $b=_{D}c$ we have that $bd=_{C_0\times C_1}ce$, and so $(bd)_i=_{C_i}(ce)_i$. By the definition of $a_i$, we thus have $a_inbd=_{C_i}a_imce$. To prove (ii),  since $a_0nb\in B\to C_0$ by (i), it is enough to show that $a_1nb\colon \forall x\vp(F_n,x,F_{a_0nb}x)$. Similarly, since $a_1nb\in B\to C_1$ by (i),  it is enough to show that for all $d\in B$, $a_1nbd\colon \vp(F_n,F_d,F_{a_0nb}F_d)$.  As  $b\colon \forall x^B\exists y^{C_0}\varphi^{C_1}(F_n,x,y)$, we have that $(bd)_1\colon \vp(F_n,F_d,F_{(bd)_0})$. Now, $a_1nbd=(bd)_1$ and  $|F_{a_0nb}F_d|=a_0nbd=(bd)_0=|F_{(bd)_0}|$. It follows from (T2) and (R2) that $a_1nbd$ is as required. \\

	(25) Axiom of relativized dependent choice: \[ a\colon \forall x^B[\vp^C(x)\imp \exists y^B(\vp^C(y)\land \psi^D(x,y))]\imp \forall x^B[\vp^C (x)\imp \exists z^{N\to B}(z0=_Bx\land \forall v^N\psi^D(zv,z(Sv)))].\] 
	
	For simplicity, suppose that $x^A$ is empty. Let $abnd\simeq \langle fbnd, 0,gbnd\rangle$, where
	$fbndi\simeq (hbndi)_0$, $gbndi\simeq (hbnd(i+1))_{2}$, and $h$ is such that $hbnd0=\langle n, d,0\rangle$ and $hbnd(i+1)\simeq b(hbndi)_0(hbndi)_{1}$.
	We claim that 
	\[  a\in \underbrace{(B\to C\to B\times C\times D)}_{E} \to B\to C\to \underbrace{(N\to B)\times N\times (N\to D)}_{F} \]
	is as desired.   Note that $abnd$ is defined everywhere. By (S), it is sufficient to show that if $b=_E c$, $b\colon  \forall x[\vp(x)\imp \exists y(\vp(y)\land \psi(x,y))]$,
	$n=_Bm$, $d=_Ce$ and $d\colon \vp(F_n)$, then $abnd=_Facme$ and $abnd\colon  \exists z(z0=_BF_n\land \forall v\psi(zv,z(Sv)))$. Note that $(abnd)_1=0=(acme)_1$. It is thus sufficient to show that:
	\begin{enumerate}
		\item[(i)] $F_{fbnd}0=_BF_n$
		\item[(ii)] $fbnd=_{N\to B} fcme$  
		\item[(iii)] $gbnd=_{N\to D}gcme$ and $gbnd\colon \forall v\psi(F_{fbnd}v, F_{fbnd}(Sv))$
	\end{enumerate}
	Item (i) follows from (T2). In fact,  $|F_{fbnd}0|\simeq fbnd0$ and by definition  $fbnd0=n$.   It remains to show (ii) and (iii). We  claim that for all $i$:
	\begin{enumerate}
		\item[(a)] $fbndi=_B fcmei$
		\item[(b)]  $(hbndi)_{1}=_C(hcmei)_{1}$ 
		\item[(c)]  $(hbndi)_{1}\colon \vp(F_{fbndi})$ 
	\end{enumerate}
	By induction on $i$. Let $i=0$. We have that  $fbnd0=n$, $fcme0=m$, $(hbnd0)_{1}=d$ and $(hcme0)_{10}=e$. It follows from the assumptions $n=_Bm$, $d=_Ce$ and $d\colon \vp(F_n)$ that (a)--(c) hold true for $i=0$.  Suppose the claim is true for $i$. Let us prove the claim for $i+1$. By the induction hypothesis and the assumptions $b=_Ec$ and $b\colon \forall x^B[\vp(x)\imp \exists y(\vp(y)\land \psi(x,y))]$, it follows that   $b(fbndi)(hbndi)_{1}=_{B\times C\times D} c(fcmei)(hcmei)_{1}$ and 
	$b(fbndi)(hbndi)_1\colon \exists y(\vp(y)\land \psi(F_{fbndi},y))$. By the definition of $h$, we thus have
	$hbnd(i+1)=_{B\times C\times D}hcme(i+1)$, and so (a) and (b) hold true for $i+1$, and  $hbnd(i+1)\colon \exists y(\vp(y)\land \psi(F_{fbndi},y))$, and hence (c) holds true for $i+1$. This completes the proof of the claim.  
	
	Clearly, (ii) follows from (a).  Finally, let us prove (iii), that is, for all $i$,   $gbndi=_Dgcmei$ and $gbndi\colon \psi(F_{fbnd}F_i, F_{fbnd}(SF_i))$. Let $i$ be given. 
	As in the proof of the claim,  we have that $hbnd(i+1)=_{B\times C\times D}hcme(i+1)$ and  $hbnd(i+1)\colon \exists y(\vp(y)\land \psi(F_{fbndi},y))$. By the definition of $g$, we thus have that $gbndi=_Dgcmei$, as desired, and $gbndi\colon\psi(F_{fbndi},F_{fbnd(i+1)})$. By 
	(R2), it follows that  $gbndi\colon \psi(F_{fbnd}F_i, F_{fbnd}(SF_i))$. 
\end{proof}

We can formalize the above proof in $\ha$. We thus obtain:

\begin{theorem}
	Let $\vp$ be a sentence of $\mcal L$. If $\ehaw+\ac+\dc\vdash \varphi$ then there is an index $a$ such that $\ha\vdash a\colon \vp$.		
\end{theorem}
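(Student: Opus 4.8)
The plan is to observe that the Soundness theorem just proved formalizes essentially verbatim in $\ha$, so that the only real work is to check that every ingredient of that informal argument is available \emph{inside} $\ha$. I would begin by fixing, once and for all, the arithmetical definitions underlying the notation. For each fixed type $A$ the relation $x=_A y$ is obtained by external recursion on $A$ and is a genuine arithmetical formula of $\ha$; for each fixed closed term $\alpha$ of $\mcal L^*$ the value $|\alpha|$ is given by a Kleene computation built from the recursion theorem and s-m-n (both available in $\ha$ by our standing assumption), hence has a $\Sigma^0_1$-definable graph; and for each fixed sentence $\vp$ the relation $a\colon\vp$ unwinds, following clause (4), into a single arithmetical formula. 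I would then note that the structural properties (E1)--(E3), (V1)--(V2), (T1)--(T2), and (R1)--(R2) are all provable in $\ha$. The ones proved by recursion on the term or on the formula---(T1), (T2), (R2)---pose no difficulty, since for a fixed $\alpha$ or a fixed $\vp$ this is a finite \emph{external} induction whose every step is a single $\ha$-provable implication; (E1)--(E3) are likewise finite recursions on the fixed type. Consequently the derived schemas (S), (S1), (S2) also hold provably in $\ha$.

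Next I would run the main argument as an external (meta-level) induction on the derivation of $\vp$ in $\ehaw+\ac+\dc$. For each axiom and inference rule I take exactly the index exhibited in the proof of the preceding Soundness theorem; the point is that each such index is given by an explicit description---$anb=b$, $an\simeq bn0^B(cn0^B)$, the recursor-style $h$ of case (25), and so on---and each description is realized by an actual natural number via the recursion theorem and s-m-n, whose existence $\ha$ proves. For every case I reproduce the corresponding verification, now reading each assertion ``$a\colon\psi$'' as the arithmetical sentence proved in $\ha$, and each appeal to a property (S), (R2), (T2), $\dots$ as an appeal to its $\ha$-provable version established in the first step. Since the object-level derivation is a finite object, it mentions only finitely many subformulas and types, so only finitely many of these fixed arithmetical predicates and finitely many structural lemmas are ever invoked; they glue together into a single $\ha$-proof of $a\colon\vp$.

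The cases that require genuine reasoning \emph{inside} $\ha$ are (16) (recursors), (19) (the induction rule), and (25) (relativized dependent choice), each of which runs an induction on a numerical parameter $i$. Here I would observe that for fixed $\vp$ the relevant statement---``$ani$ realizes $\vp(F_n,F_i)$'', respectively the conjunction (a)--(c) of case (25)---is an arithmetical formula, and $\ha$ has full induction for arithmetical formulas, so these inductions are legitimate $\ha$-proofs.

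The main obstacle, and the only thing needing care, is the bookkeeping around the recursion over types and formulas: one must be sure that no step secretly demands a uniform, $\ha$-internal recursion over \emph{all} types or \emph{all} formulas, which would not be captured by a single arithmetical formula. This is exactly what finiteness of derivations rules out: every recursion on types or formulas that actually appears is a finite external one, so the formalization reduces to finitely many $\ha$-provable facts assembled by the meta-induction on the proof. Once this is noted, the conclusion $\ha\vdash a\colon\vp$ follows.
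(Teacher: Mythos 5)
Your proposal is correct and is essentially the paper's own proof: the paper disposes of this theorem with the single remark that the preceding soundness proof formalizes in $\ha$, which is exactly what you spell out, using the very same indices and case analysis. Your identification of the key points --- that for fixed types and formulas the relations $=_A$, the values $|\alpha|$, and the predicates $a\colon\vp$ are arithmetical, that the structural lemmas (E), (V), (T), (R), (S) are finite external recursions each of whose steps is $\ha$-provable, and that only the recursor, induction-rule, and $\dc$ cases require $\ha$'s internal induction on a numerical parameter --- is precisely the justification the paper leaves implicit.
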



\subsection{Remarks}\label{why}
One can define a  Goodman version $p\fo a\colon \vp$ of this realizability notion. For instance, the clause for implication would be
\begin{itemize}
	\item $p\fo a\colon \vp^A\imp\psi^B$ iff $p\fo a\in A\to B$ and for all $q\supseteq p$ and for all $b$, if $q\fo b\colon \vp^A$ then  there is $r\supseteq q$ such that $r\fo ab\colon \psi^B$.
\end{itemize}
This interpretation is sound, that is,  for every sentence $\vp$ provable in $\ehaw+\ac+\dc$ and for every definable set $T$ of partial functions there is an index $a$ such that $\ha\vdash \forall p\in T\ p\fo a\colon \vp$. However, the self-realizability theorem fails in $\ha$. Let us see what goes wrong. 

Suppose we want to prove that for every first-order formula $\vp(x_1,\ldots,x_k)$ of type $B$ there exist a set $T$ of finite functions and an index $a$  such that:
\begin{enumerate}
	\item[(i)] For all $n$ and for all $p\in T$ there is $q\supseteq p$ such that $q\fo an\in B$. 
	\item[(ii)] If $\vp(\bar n)$ is true, then for all $p\in T$ there is $q\supseteq p$ such that $q\fo an \colon \vp(\bar n)$
	\item[(iii)] If $p\fo b\colon \vp(\bar n)$ then $\vp(\bar n)$ is true,
\end{enumerate}
where $n=n_1\ldots n_k$ and $\vp(\bar n)$ is $\vp(\bar n_1,\ldots, \bar n_k)$. Choose $T$ as in the proof of Theorem \ref{self}. The proof goes smoothly except for $\lor$ and $\exists$. In fact,  we need classical logic to prove $(i)$, and so we cannot formalize the proof  in $\ha$. Note however that the argument can be carried out in $\pa$. In more detail,  consider $\psi_j=\psi_0\lor\psi_1$, with $\psi_i$ of type $B_i$. Suppose $a_i$  works for $\psi_i$. Let 
\[   a^pn\simeq \begin{cases}
\langle 0,a_0^pn,0^{B_1}\rangle & \text{ if } p(j,n)=0 \\
\langle 1,0^{B_0},a_1^pn\rangle & \text{ if } p(j,n)=1 
\end{cases}\] 
Fix $n,p$. By induction we can find $q\supseteq p$ such that ${a_i}^qn$ is defined for every $i<2$. However we cannot prove that there is an extension of $q$ in $T$ which is defined on $j,n$. In fact, we should be able to decide whether $\psi_0(\bar n)\lor \psi_1(\bar n)$ is true. Similarly, if $\psi=\exists x\psi_0$ and $a$ is as in the proof of Theorem \ref{self}, then   we cannot prove that for all $n$ and $p$ there is $q\supseteq p$ such  that $a^qn$ is defined. In fact, as before, we should be able to decide whether $\exists x \psi_0(\bar n)$ is true.  

\section*{Note} After submitting the paper, the author was informed by Fernando Ferreira of a recent work by Benno van den Berg and Lotte van Slooten \cite{BS} on Goodman's theorem. Combining different ideas from previous proofs, in particular Lavalette \cite{lava}, they give another proof  of  Goodman's theorem and its extensional version. To do so, they define the system $\sf HAP$, a variant of Beeson's $\sf EON$ (Elementary theory of Operations and Numbers) and Troelstra's $\sf APP$, which is a conservative extension of $\ha$  based on the logic of partial terms introduced by Beeson \cite{Beeson85}. They then show that (1) if $\haw+\ac\vdash \vp$ then ${\sf HAP}\vdash \exists x(x\ \mathbf{r}\ \vp)$, where $x\ \mathbf{r}\ \vp$ is a realizability interpretation of formulas $\vp$ of $\haw$ into formulas of $\sf HAP$ which combines Kleene recursive realizability with  the model $\sf HRO$. They finally show that for a conservative extension ${\sf HAP}_\varepsilon$ of $\sf HAP$, obtained from $\sf HAP$ by adding \emph{Skolemization axioms} of the form $\exists y\vp(x,y)\imp f_\vp\cdot x\downarrow$ and $f_\vp\cdot x\imp \vp(x, f\cdot x)$ for every arithmetical formula of $\sf HAP$, one proves that (2) ${\sf HAP}_\varepsilon\vdash \vp\biimp \exists x(x\ \mathbf{r}\ \vp)$ for every arithmetical formula $\vp$.
Putting the pieces together one obtains for every arithmetical formula $\vp$
\[  \haw+\ac\vdash \vp \Rightarrow  {\sf HAP}\vdash \exists x(x\ \mathbf{r}\ \vp) \Rightarrow {\sf HAP}_\varepsilon \vdash \vp \Rightarrow \ha\vdash \vp \]
It is worth noticing that the conservativity proof of ${\sf HAP}_\varepsilon$ over $\sf HAP$ is based on a forcing interpretation $p\fo \vp$ of formulas $\vp$ of ${\sf HAP}_\varepsilon$ that resembles our forcing relation and that the proof of (2) is similar to the proof of the self-realizability theorem.  

For $\ehaw+\ac$ they define an extensional version $x=y\ \mathbf{e}\ \vp$ of Kleene recursive realizability combined with the model $\sf HEO$, and they prove corresponding (1) and (2). This notion resembles our $(a,b)\colon \vp$.

\subsection*{Acknowledgements}

The author thanks the anonymous referees who made very helpful comments and suggestions.
This research was supported by the Funda\c{c}\~{a}o para a Ci\^{e}ncia e a Tecnologia [UID/MAT/04561/2013] and Centro de Matem\'{a}tica,  Applica\c{c}\~{o}es Fundamentais e Investiga\c{c}\~{a}o Operacional of Universidade de Lisboa.


\begin{thebibliography}{10}
	
	\bibitem{beeson}
	Michael~J. Beeson.
	\newblock Goodman's theorem and beyond.
	\newblock {\em Pacific J. Math.}, 84(1):1--16, 1979.
	
	\bibitem{Beeson85}
	Michael~J. Beeson.
	\newblock {\em Foundations of constructive mathematics}, volume~6 of {\em
		Ergebnisse der Mathematik und ihrer Grenzgebiete (3) [Results in Mathematics
		and Related Areas (3)]}.
	\newblock Springer-Verlag, Berlin, 1985.
	\newblock Metamathematical studies.
	
	\bibitem{coq}
	Thierry Coquand.
	\newblock About {G}oodman's theorem.
	\newblock {\em Annals of Pure and Applied Logic}, 164(4):437 -- 442, 2013.
	\newblock Articles in honor of Giovanni Sambin's 60th birthday.
	
	\bibitem{lava}
	Gerard R.~Renardel de~Lavalette.
	\newblock Extended bar induction in applicative theories.
	\newblock {\em Annals of Pure and Applied Logic}, 50(2):139--189, 1990.
	
	\bibitem{goodman76}
	Nicolas~D. Goodman.
	\newblock The theory of the {G}\"{o}del functionals.
	\newblock {\em The Journal of Symbolic Logic}, 41(3):574--582, 1976.
	
	\bibitem{goodman}
	Nicolas~D. Goodman.
	\newblock Relativized realizability in intuitionistic arithmetic of all finite
	types.
	\newblock {\em J. Symbolic Logic}, 43(1):23--44, 1978.
	
	\bibitem{kohl99}
	Ulrich Kohlenbach.
	\newblock A note on {G}oodman's theorem.
	\newblock {\em Studia Logica: An International Journal for Symbolic Logic},
	63(1):1--5, 1999.
	
	\bibitem{kohl}
	Ulrich Kohlenbach.
	\newblock {\em Applied Proof Theory: Proof Interpretations and their Use in
		Mathematics}.
	\newblock Springer Monographs in Mathematics. Springer Verlag, 2008.
	
	\bibitem{troelstra}
	Anne~S. Troelstra, editor.
	\newblock {\em Metamathematical investigation of intuitionistic arithmetic and
		analysis}.
	\newblock Lecture Notes in Mathematics, Vol. 344. Springer-Verlag, Berlin,
	1973.
	
	\bibitem{BS}
	Benno van~den Berg and Lotte van Slooten.
	\newblock Arithmetical conservation results.
	\newblock {\em Indag. Math. (N.S.)}, 29(1):260--275, 2018.
	
\end{thebibliography}
\bibliographystyle{plain}

\end{document}